\newcommand\Defn[1]{\textbf{#1}}
\renewcommand\emptyset{\varnothing}
\newcommand\Z{\mathbb{Z}}
\newcommand\Znn{\Z_{\ge0}}
\newcommand\kk{\mathbf{k}}
\newcommand\x{\mathbf{x}}
\newcommand{\upshad}[2]{\tilde{\partial}^{#2}(#1)}
\DeclareMathOperator{\Tor}{Tor}
\newtheorem{thm}{Theorem}%
\newtheorem{cor}[thm]{Corollary}
\newtheorem{prop}[thm]{Proposition}
\newtheorem{conj}{Conjecture}
\theoremstyle{definition}
\newtheorem{ex}{Example}
\title{On $f$- and $h$-vectors of relative simplicial complexes}
\author{Giulia Codenotti}
\address{Fachbereich Mathematik und Informatik, %
Freie Universit\"at Berlin, Berlin, %
Germany}
\email{codenotti@math.fu-berlin.de}
\author{Lukas Katth\"an}
\address{Institut f\"ur Mathematik, Goethe-Universit\"at Frankfurt, Germany}
\email{katthaen@math.uni-frankfurt.de}
\author{Raman Sanyal}
\address{Institut f\"ur Mathematik, Goethe-Universit\"at Frankfurt, Germany}
\email{sanyal@math.uni-frankfurt.de}
\keywords{relative simplicial complex, $f$-vector, Kruskal--Katona theorem,
Hilbert functions, $h$-vector, Macaulay theorem}
\subjclass[2010]{Primary: 05E45. Secondary: 05E40, 13F55}
\date{\today}
\begin{document}

\begin{abstract}
    A relative simplicial complex is a collection of sets of the form $\Delta
    \setminus \Gamma$, where $\Gamma \subset \Delta$ are simplicial complexes.
    Relative complexes have played key roles in recent advances in algebraic,
    geometric, and topological combinatorics but, in contrast to simplicial
    complexes, little is known about their general combinatorial structure.
    In this paper, we address a basic question in this direction and give a
    characterization of $f$-vectors of relative (multi)complexes on a ground
    set of fixed size. On the algebraic side, this yields a characterization
    of Hilbert functions of quotients of homogeneous ideals over polynomial
    rings with a fixed number of indeterminates. 
    
    Moreover, we characterize $h$-vectors of fully Cohen--Macaulay relative
    complexes as well as $h$-vectors of Cohen--Macaulay relative complexes
    with minimal faces of given dimensions. The latter resolves a question of
    Bj\"orner.
\end{abstract}

\maketitle

\section{Introduction}\label{sec:intro}%

A \Defn{simplicial complex} $\Delta$ is a collection of subsets of a finite
ground set, say $[n] := \{1,\dots,n\}$, such that $\sigma \in \Delta$ and
$\tau \subseteq \sigma$ implies $\tau \in \Delta$. Simplicial complexes are
fundamental objects in algebraic, geometric, and topological combinatorics; see,
for example,~\cite{Stanley96,crt,bjorner}. A basic combinatorial statistic of $\Delta$ is
the \Defn{face vector} (or \Defn{$\boldsymbol f$-vector}) 
\[
    f(\Delta) = (f_{-1},f_0,\dots,f_{d-1}) \, ,
\]
where $f_k = f_k(\Delta)$ records the number of faces $\sigma \in \Delta$ of
dimension $k$, where $\dim \sigma := |\sigma| - 1$ and $d - 1 = \dim \Delta :=
\max \{ \dim \sigma : \sigma \in \Delta\}$. Notice that we allow $\Delta =
\emptyset$, the \emph{void} complex, which is the only complex with
$f_k(\Delta) = 0$ for all $k \ge -1$.  A \Defn{relative simplicial complex}
$\Psi$ on the ground set $[n]$ is the collection of sets $ \Delta \setminus
\Gamma = \{ \tau \in \Delta :  \tau \not \in \Gamma \}$, where $\Gamma \subset
\Delta \subseteq 2^{[n]}$ are simplicial complexes. In general, the pair of
simplicial complexes $(\Delta,\Gamma)$ is not uniquely determined by $\Psi$,
and we call $\Psi = (\Delta, \Gamma)$ a \Defn{presentation} of $\Psi$.  We set
$\dim \Psi := \max \{ \dim \sigma : \sigma \in \Delta \setminus \Gamma \}$.
Relative complexes were introduced by Stanley~\cite{stanley87} and made
prominent recent appearances in, for example,~\cite{AS16,DGKM16, MN, MNY}. The
$f$-vector of a relative complex is given by 
\[
    f(\Psi) \ := \ f(\Delta) - f(\Gamma) \, ,
\]
where we set $f_k(\Gamma) := 0$ for all $k > \dim \Gamma$. When $\Gamma =
\emptyset$, then $\Psi$ is simply a simplicial complex and we write $\Delta$
instead of $\Psi$. We call $\Psi$ a \Defn{proper} relative complex if $\Gamma \neq
\emptyset$ or, equivalently, if $f_{-1}(\Psi) = 0$. 

In contrast to simplicial complexes, much less is known about the
combinatorics of relative simplicial complexes. The first goal of this paper
is to address the following basic question: 
\begin{center}
    \it Which vectors $f = (0,f_0,\dots,f_{d-1}) \in \Z^{d+1}_{\ge0}$ are
    $f$-vectors of proper relative simplicial complexes?
\end{center}
For simplicial complexes, this question is beautifully answered by the
Kruskal--Katona theorem~\cite{kruskal,katona}. Bj\"orner and Kalai~\cite{BK}
characterized the pairs $(f(\Delta),\beta(\Delta))$ where $\Delta$ is a
simplicial complex and $\beta(\Delta)$ is the sequence of Betti numbers of
$\Delta$ (over a field $\kk$). Duval~\cite{duval} characterized the
pairs
$(f(\Delta),f(\Gamma))$ where $\Delta
\subseteq \Gamma$ but, as stated before, the presentation $\Psi = \Delta
\setminus \Gamma$ is generally not unique. Moreover, the following example
shows that a characterization of $f$-vectors of relative complexes is trivial
without further qualifications.

\begin{ex} \label{ex:all_vectors}%
    If $\Delta = 2^{[k+1]}$ is a $k$-dimensional simplex and $\partial \Delta :=
    \Delta \setminus \{[k+1]\}$ denotes its boundary complex, then
    $f_i(\Delta,\partial\Delta) = 1$ if $i = k$ and is zero otherwise. Hence, by
    observing that relative simplicial complexes are closed under
    disjoint unions, any vector $f = (0, f_0,\dots,f_{d-1}) \in
    \Z_{\ge0}^{d+1}$ can occur as the $f$-vector of a proper relative simplicial
    complex.
\end{ex}

The main difference between $f$-vectors of complexes and relative complexes is
that $f_0(\Psi)$ does not reveal the size of the ground set and the
construction outlined in Example~\ref{ex:all_vectors} produces relative
complexes with given $f$-vectors on large ground sets.  Restricting the size
of the ground set is the key to a meaningful treatment of $f$-vectors of
relative complexes.
Therefore, we are going to characterize the $f$-vectors of relative complexes
$\Psi = \Delta \setminus \Gamma$ with $\Gamma \subset \Delta \subseteq 2^{[n]}$
for fixed $n$.
To state our characterization, we need to recall the
binomial representation of a natural number: For any $r,k \in \Z_{\ge 0}$ with
$k > 0$, there are unique integers $ r_k > r_{k-1} > \cdots > r_1 \ge 0$ such
that
\begin{equation}\label{eqn:binomial}
    r \ = \ 
    \binom{r_{k}}{k} +
    \binom{r_{k-1}}{k-1} +
    \cdots +
    \binom{r_{1}}{1} \, .
\end{equation}
We refer the reader to Greene--Kleitman's excellent article~\cite[Sect.~8]{GK}
for details and combinatorial motivations for this and the following
definition. For the representation given in~\eqref{eqn:binomial} we define
\[
    \partial_k(r) \ := \
    \binom{r_{k}}{k-1} +
    \binom{r_{k-1}}{k-2} +
    \cdots +
    \binom{r_{1}}{0} \, .
\]

The Kruskal-Katona theorem characterizes $f$-vectors of simplicial complexes in terms of these $\partial_k(r)$, see Theorem~\ref{thm:KK}.
We prove the following characterization of $f$-vectors of proper relative
complexes in Section~\ref{sec:f-rel}.

\begin{thm}\label{thm:relKK}%
    Let $f = (0,f_0,\dots,f_{d-1}) \in \Znn^{d+1}$ and $n > 0$ and define two
    sequences $(a_0,\dots,a_{d-1})$ and $(b_0,\dots,b_{d-1})$ by
    $a_{d-1} := f_{d-1}$ and $b_{d-1} :=0$ and continue recursively
    \begin{align*}
        a_{k-1} &\ := \ \max(\partial_{k+1}(a_{k}), 
        f_{k-1} + \partial_{k+1}(b_{k}) )   \\
        b_{k-1} &\ := \  \max(\partial_{k+1}(b_{k}), 
  \partial_{k+1}(a_{k})-f_{k-1}  ) 
    \end{align*}
    for $k \ge 0$.
    Then there is a proper relative simplicial complex $\Psi$ on the ground
    set $[n]$ with $f = f(\Psi)$ if and only if $a_0 \le n$.
\end{thm}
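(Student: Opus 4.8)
The plan is to translate the question into one about pairs of ordinary simplicial complexes and then feed it to the Kruskal--Katona theorem. Fix a presentation $\Psi = (\Delta,\Gamma)$ with $\Gamma \subseteq \Delta \subseteq 2^{[n]}$ and write $h_k := f_k(\Delta)$, $g_k := f_k(\Gamma)$. Since $f_k(\Psi)=0$ for $k\notin\{0,\dots,d-1\}$ we have $h_k = g_k + f_k$ for $-1 \le k \le d-1$ (with $f_{-1}:=0$) and $h_k=g_k$ for $k\ge d$. The constraints on such a pair are exactly: (i) Kruskal--Katona for $\Delta$, $h_{k-1}\ge\partial_{k+1}(h_k)$; (ii) Kruskal--Katona for $\Gamma$, $g_{k-1}\ge\partial_{k+1}(g_k)$; (iii) $\Gamma\subseteq\Delta$; and (iv) $\Delta\subseteq 2^{[n]}$, which is equivalent to $h_0=f_0(\Delta)\le n$ (the bounds $h_k\le\binom{n}{k+1}$ for $k\ge1$ follow, a complex on at most $n$ vertices having at most $\binom{n}{k+1}$ faces of dimension $k$). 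A first bookkeeping lemma, proved by downward induction from $k=d-1$, is that the two recursions are linked by $a_k = b_k + f_k$ for all $k$ (the two expressions for $a_{k-1}$ and $b_{k-1}+f_{k-1}$ reduce to the same maximum); in particular $a_k\ge b_k\ge0$, and $a_k - b_k = f_k$ will be precisely the face numbers of the $\Psi$ we build.

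For necessity, I would show by downward induction that $h_k\ge a_k$ and $g_k\ge b_k$ for $0\le k\le d-1$. The base $k=d-1$ is immediate: $h_{d-1}=g_{d-1}+f_{d-1}\ge f_{d-1}=a_{d-1}$ and $g_{d-1}\ge0=b_{d-1}$. For the inductive step, assuming the claim at level $k$ and using monotonicity of $\partial_{k+1}$ together with (i), (ii), and the relation $h_{k-1}=g_{k-1}+f_{k-1}$,
\[
 h_{k-1} \;\ge\; \max\bigl(\partial_{k+1}(h_k),\ f_{k-1}+\partial_{k+1}(g_k)\bigr)\;\ge\;\max\bigl(\partial_{k+1}(a_k),\ f_{k-1}+\partial_{k+1}(b_k)\bigr)\;=\;a_{k-1},
\]
and symmetrically $g_{k-1}\ge\max\bigl(\partial_{k+1}(g_k),\ \partial_{k+1}(h_k)-f_{k-1}\bigr)\ge b_{k-1}$. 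Then $n\ge f_0(\Delta)=h_0\ge a_0$, as required.

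For sufficiency, assume $a_0\le n$. Directly from the defining recursion, $a_{k-1}\ge\partial_{k+1}(a_k)$ and $b_{k-1}\ge\partial_{k+1}(b_k)$ for all $k$ (with $a_d=b_d:=0$), and the level-$0$ conditions hold since $\partial_1$ takes values in $\{0,1\}$; so by the Kruskal--Katona theorem (Theorem~\ref{thm:KK}) both $(1,a_0,\dots,a_{d-1})$ and $(1,b_0,\dots,b_{d-1})$ are $f$-vectors of simplicial complexes. I would take $\Delta$ and $\Gamma$ to be the \emph{compressed} complexes realizing them, whose $k$-faces are the first $a_k$, respectively first $b_k$, $(k{+}1)$-subsets of $\N$ in reverse-lexicographic order~\cite[Sect.~8]{GK}. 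Since $b_k\le a_k$ for every $k$, the $k$-faces of $\Gamma$ are an initial segment of those of $\Delta$, hence $\Gamma\subseteq\Delta$; and the vertices of the compressed $\Delta$ are $\{1\},\dots,\{a_0\}$, so every face of $\Delta$, hence of $\Gamma$, lies in $2^{[a_0]}\subseteq 2^{[n]}$. Finally $f_k(\Delta)-f_k(\Gamma)=a_k-b_k=f_k$ for $0\le k\le d-1$ and $=0$ otherwise, and $\emptyset\in\Gamma$, so $\Psi=(\Delta,\Gamma)$ is a proper relative complex on $[n]$ with $f(\Psi)=f$. (The degenerate cases $d=0$ and $f\equiv\0$, and the fact that faces of dimension $\ge d$ are irrelevant and may be taken empty, are routine.)

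The point that needs the most care is the containment $\Gamma\subseteq\Delta$ in the sufficiency direction. The necessity argument only uses the numerical inequalities $f_k(\Gamma)\le f_k(\Delta)$, so a priori it is not clear that the componentwise-minimal admissible pair of face-number sequences $(b_k)$ and $(a_k)$ can be realized \emph{simultaneously} by nested complexes rather than merely by two complexes of the right sizes. This is exactly where the compressed complexes earn their keep: reverse-lexicographic initial segments in each dimension are automatically nested, which upgrades the inequalities $b_k\le a_k$ to an honest inclusion $\Gamma\subseteq\Delta$. I expect the rest to be a direct assembly of the Kruskal--Katona machinery (monotonicity of $\partial_k$, shadows of colex-initial segments, and the identification of the vertex set of a compressed complex), all standard.
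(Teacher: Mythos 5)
Your proposal is correct and follows essentially the same route as the paper: establish $a_k - b_k = f_k$ from the recursion, prove necessity by downward induction showing $f_k(\Delta)\ge a_k$ and $f_k(\Gamma)\ge b_k$, and prove sufficiency by realizing $(1,a_0,\dots,a_{d-1})$ and $(1,b_0,\dots,b_{d-1})$ as compressed complexes and using that compressed complexes with componentwise-ordered $f$-vectors are automatically nested. The additional detail you spell out (monotonicity of $\partial_{k+1}$, the $\partial_1$ base case, identification of the vertex set of a compressed complex) is exactly what the paper leaves implicit.
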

The two sequences $(1,a_0,\dots,a_{d-1})$ and $(1,b_0,\dots,b_{d-1})$ are the
componentwise-minimal $f$-vectors of simplicial complexes $\Delta$ and
$\Gamma$ such that $\Gamma \subseteq \Delta$ and $f_{k-1} = f_{k-1}(\Delta) -
f_{k-1}(\Gamma)$ for all $0 \le k < d$.

(Relative) simplicial complexes can be generalized to (relative)
\emph{multicomplexes} by replacing sets with multisets. The notion of an
$f$-vector of a multicomplex is immediate (by taking into account
multiplicities) and the question above carries over to relative multicomplexes
on a ground set of fixed size.  Multicomplexes are more natural from an
algebraic perspective:  If $S := \kk[x_1,\dots,x_n]$ is the polynomial ring
over a field $\kk$ with $n$ indeterminates and $I \subseteq S$ is a monomial
ideal, then the monomials outside $I$ form a (possibly infinite) multicomplex
on ground set $[n]$ and every multicomplex over $[n]$ arises this way. In
particular, the $f$-vector of a multicomplex is the Hilbert function of $S/I$.
By appealing to initial ideals it is easy to see that $f$-vectors of
(infinite) multicomplexes are exactly the Hilbert functions of standard graded
algebras, which were characterized by Macaulay~\cite{macaulay}.  In
Section~\ref{sec:f-rel-mult} we give precise definitions and
Theorem~\ref{thm:relKKm} is the corresponding analogue of
Theorem~\ref{thm:relKK} for proper, possibly infinite, relative
multicomplexes.  The corresponding algebraic statement characterizes Hilbert
functions of $I/J$ where $J \subset I \subseteq S$ are pairs of homogeneous
ideals; see Corollary~\ref{cor:relKKm}.

The \Defn{$\boldsymbol h$-vector} $h(\Psi) = (h_0,\dots,h_d)$ of a
$(d-1)$-dimensional relative complex $\Psi$ is defined through
\begin{equation}\label{eqn:h-vec}
    \sum_{k=0}^d f_{k-1}(\Psi) t^{d-k} \ = \ \sum_{i=0}^d h_{i}(\Psi)
    (t+1)^{d-i} \, .
\end{equation}
Note that if $\dim \Delta = \dim \Gamma$, then $h(\Psi) = h(\Delta) -
h(\Gamma)$.
The $h$-vector clearly carries the same information as the $f$-vector but it
has been amply demonstrated that $h$-vectors often times reveal more
structure; see~\cite{Stanley96} for example.  In particular, if $\Delta$ is a
\Defn{Cohen--Macaulay} simplicial complex (or CM complex, for short) over some
field $\kk$, then $h_i(\Delta) \ge 0$ for all $i \ge 0$.
Stanley~\cite{StanleyCM} showed that Macaulay's theorem characterizing Hilbert
functions of standard graded algebras yields a characterization of $h$-vectors
of CM complexes akin to the Kruskal--Katona theorem. Stronger even, Bj\"orner,
Frankl, and Stanley~\cite{BFS} showed that all admissible $h$-vectors can be
realized by shellable simplicial complexes, a proper subset of CM complexes.

In Section~\ref{sec:macaulay}, we recall the definition of a Cohen--Macaulay
relative complex and we give a characterization of $h$-vectors of
\emph{fully} CM relative complexes. We call a relative complex $\Psi$
\Defn{fully Cohen--Macaulay} over a ground set $[n]$ if it has a presentation
$\Psi = (\Delta,\Gamma)$ with $\Gamma \subset \Delta \subseteq 2^{[n]}$, 
$\dim \Gamma = \dim \Psi$, and $\Psi$ as well as $\Delta$ and $\Gamma$ are
Cohen--Macaulay.

For $r,k \in \Znn$ with $k > 0$, let $r_k > \dots >
r_1 \ge 0$ as defined by~\eqref{eqn:binomial}. We define
\newcommand\Partial{\widetilde{\partial}}%
\[
    \Partial_k(r) \ := \
    \binom{r_{k}-1}{k-1} +
    \binom{r_{k-1}-1}{k-2} +
    \cdots +
    \binom{r_{1}-1}{0} \, .
\]
Note that $\Psi$ is proper if and only if $h_0(\Psi) = 0$. Our
characterization of $h$-vectors of fully CM complexes parallels that of CM
complexes in that it suffices to consider \emph{fully shellable} relative
complexes; see Section~\ref{sec:macaulay} for a definition.

\begin{thm}\label{thm:relM}
    Let $h = (0,h_1,\dots,h_{d}) \in \Znn^{d+1}$ and $n > 0$. Then the
    following are equivalent:
    \begin{enumerate}[\rm (a)]
        \item There is a fully CM relative complex $\Psi$ on ground set $[n]$
            with $h = h(\Psi)$;
        \item There is a fully shellable relative complex $\Psi$ on ground set
            $[n]$ with $h = h(\Psi)$;
        \item Let $(a_0,\dots,a_{d-1})$ and
            $(b_0,\dots,b_{d-1})$ be the sequences defined through $a_{d-1} := h_{d}$ and
            $b_{d-1} :=0$ and recursively continued
    \begin{align*}
        a_{i-1} &\ := \ \max(\Partial_{i+1}(a_{i}), 
        h_{i} + \Partial_{i+1}(b_{i}) )   \\
        b_{i-1} &\ := \  \max(\Partial_{i+1}(b_{i}), 
  \Partial_{i+1}(a_{i})-h_{i}  ) 
    \end{align*}
    for $i \ge 1$. Then $a_0 \le n-d$.     
    \end{enumerate}
\end{thm}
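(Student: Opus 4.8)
The plan is to prove the cycle (b)$\,\Rightarrow\,$(a)$\,\Rightarrow\,$(c)$\,\Rightarrow\,$(b). The implication (b)$\,\Rightarrow\,$(a) will be immediate from the material of Section~\ref{sec:macaulay}: a fully shellable relative complex is fully Cohen--Macaulay, because shellable simplicial complexes are Cohen--Macaulay.

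For (a)$\,\Rightarrow\,$(c) I would pass to Artinian reductions. Assuming $\kk$ infinite (harmless: extension of scalars preserves Cohen--Macaulayness and Hilbert functions, and leaves $f$-vectors and shellings untouched), fix a presentation $\Psi=(\Delta,\Gamma)$ with $\Gamma\subset\Delta\subseteq2^{[n]}$, $\dim\Gamma=\dim\Psi=d-1$, and $\kk[\Delta]$, $\kk[\Gamma]$ Cohen--Macaulay of Krull dimension $d$ (this already forces the relative Stanley--Reisner module $\kk[\Delta,\Gamma]=I_\Gamma/I_\Delta$ to be Cohen--Macaulay of dimension $d$, by the local cohomology long exact sequence, so the extra hypothesis on $\Psi$ costs nothing). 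Choose generic linear forms $\theta=\theta_1,\dots,\theta_d\in S=\kk[x_1,\dots,x_n]$; they form a regular sequence on $\kk[\Delta]$ and on $\kk[\Gamma]$, so reducing $0\to\kk[\Delta,\Gamma]\to\kk[\Delta]\to\kk[\Gamma]\to0$ modulo $\theta$ stays exact (as $\Tor_1^S(\kk[\Gamma],S/\theta S)=0$), and the resulting module $\kk[\Delta,\Gamma]/\theta\,\kk[\Delta,\Gamma]$ has Hilbert function $h(\Delta)-h(\Gamma)=h(\Psi)$. Writing $R:=S/\theta S\cong\kk[z_1,\dots,z_{n-d}]$, this module equals $\overline{I_\Gamma}/\overline{I_\Delta}$ for homogeneous ideals $\overline{I_\Delta}\subseteq\overline{I_\Gamma}\subseteq R$; passing to initial ideals replaces the pair by monomial ideals with the same Hilbert function, exhibiting $h(\Psi)$ as the $f$-vector of a proper relative multicomplex on $[n-d]$ (under the degree-to-dimension shift $f_{k-1}=h_k$). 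Now the relative multicomplex analogue of Theorem~\ref{thm:relKK}, namely Theorem~\ref{thm:relKKm} (equivalently its algebraic form Corollary~\ref{cor:relKKm}), applies; its recursion coincides with the recursion of part~(c) after this reindexing, with $\partial$ replaced by $\Partial$, the ``descending'' Macaulay operator. Hence the existence of such a multicomplex on $[n-d]$ yields $a_0\le n-d$.

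For (c)$\,\Rightarrow\,$(b), run the recursion to produce $(1,a_0,\dots,a_{d-1})$ and $(1,b_0,\dots,b_{d-1})$. A short induction on the formulas gives $b_i\le a_i$ and $a_i-b_i=h_{i+1}$ for all $i$, while the maxima force $a_{i-1}\ge\Partial_{i+1}(a_i)$ and $b_{i-1}\ge\Partial_{i+1}(b_i)$, so both $(1,a_\bullet)$ and $(1,b_\bullet)$ are admissible $h$-vectors of $(d-1)$-dimensional complexes, individually realizable on $[n]$ since $b_0\le a_0\le n-d$. It then remains to realize them \emph{simultaneously} by shellable complexes $\Gamma\subseteq\Delta$ on $[n]$ of dimension $d-1$ whose shellings are \emph{compatible}, i.e.\ the shelling of $\Gamma$ is an initial segment of a shelling of $\Delta$; for then $\Psi:=(\Delta,\Gamma)$ is fully shellable with $h(\Psi)=(0,a_0-b_0,\dots,a_{d-1}-b_{d-1})=h$. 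I would use the rev-lex (compressed) construction: if $h'\le h$ componentwise then the rev-lex shellable complex for $h'$ is contained in that for $h$, and their rev-lex shellings are compatible in the above sense. Proving this monotonicity and shelling-compatibility --- a relative strengthening of the Bj\"orner--Frankl--Stanley realization of $h$-vectors of Cohen--Macaulay complexes by shellable ones --- is the technical heart of the theorem, and the place where the relative setting genuinely costs effort: one must track two rev-lex complexes at once and verify that the extremal facet sets for the larger $h$-vector restrict to those for the smaller one. This, together with the precise definition of a fully shellable relative complex, is what I would establish in Section~\ref{sec:macaulay}.
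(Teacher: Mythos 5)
Your three implications coincide with the paper's, traversed as (b)$\Rightarrow$(a)$\Rightarrow$(c)$\Rightarrow$(b) rather than (c)$\Rightarrow$(b)$\Rightarrow$(a) and (a)$\Rightarrow$(c); the content is the same. For (b)$\Rightarrow$(a) and (a)$\Rightarrow$(c) your sketch matches the paper's proof in detail: Artinian reduction by a generic linear system of parameters, the $\Tor_1$-vanishing because $\kk[\Gamma]$ is Cohen--Macaulay, passage to initial ideals in $\kk[z_1,\dots,z_{n-d}]$, and Theorem~\ref{thm:relKKm}. Your aside that Cohen--Macaulayness of $\Delta$ and $\Gamma$ with $\dim\Gamma=\dim\Psi$ already forces $\Psi$ to be Cohen--Macaulay is also correct, though the paper simply carries that hypothesis along rather than deriving it.

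The gap is in (c)$\Rightarrow$(b). You correctly fix on the Bj\"orner--Frankl--Stanley map $\Phi_d$ applied to a pair of compressed multicomplexes $\widetilde\Gamma\subset\widetilde\Delta$ on $[n-d]$, but the compatibility you set out to prove --- that the rev-lex shelling of $\Gamma$ be an \emph{initial segment} of the rev-lex shelling of $\Delta$ --- is false in general, so that route dead-ends. The BFS shelling processes facets in increasing order of the size of the $\Phi_d$-preimage (equivalently, of $|R(\sigma)|$), so a facet of $\Delta\setminus\Gamma$ with small restriction set necessarily precedes a $\Gamma$-facet with a larger one, and the two families interleave. Concretely, with $d=2$ and $\widetilde\Gamma=\{\emptyset,\{1\},\{1,1\}\}\subset\widetilde\Delta=\widetilde\Gamma\cup\{\{2\},\{1,2\}\}$ on $[2]$, the $\Delta$-shelling order is $\{1,2\},\{1,3\},\{1,4\},\{2,3\},\{2,4\}$, where $\{1,4\}=\Phi_2(\{2\})\notin\Gamma$ comes \emph{before} the $\Gamma$-facet $\{2,3\}=\Phi_2(\{1,1\})$. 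What is actually true --- and what Proposition~\ref{prop:BFSrelShell} of the paper proves --- is a weaker but sufficient compatibility: the single BFS ordering of $\Delta$'s facets, restricted to the $\Gamma$-facets, is a shelling of $\Gamma$, and restricted to the $\Psi$-facets it is a relative shelling of $\Psi$ with the same restriction sets. The work that must be done is to show that for each facet $\sigma$ of $\Delta$ lying outside $\Gamma$, the restriction set $R(\sigma)$ also lies outside $\Gamma$; this is proved by comparing $\Phi_d$-preimages and using that $\widetilde\Gamma$ is a compressed submulticomplex of $\widetilde\Delta$. Replace your initial-segment condition by this restriction-set condition and the rest of your outline goes through.
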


In Section~\ref{sec:fully}, we discuss the difference between CM and fully CM
relative complexes. In particular, we show in Theorem~\ref{thm:nice} that
every $(d-1)$-dimensional CM relative complex has a presentation as a fully CM
relative complex if we allow the ground set to grow by at most $d$ elements.
From this, we derive the following necessary condition on $h$-vectors of
proper CM relative complexes.

\begin{cor}\label{cor:necessary}
    Let $h = (0,h_1,\dots,h_{d}) \in \Znn^{d+1}$ and $n > 0$.  Further, let
    $(a_0,\dots,a_{d-1})$ and $(b_0,\dots,b_{d-1})$ be the sequences defined
    in Theorem~\ref{thm:relM}(c).  If there exists a CM relative complex $\Psi$
    on ground set $[n]$ with $h = h(\Psi)$, then $a_0 \leq n$.
\end{cor}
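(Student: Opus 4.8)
The plan is to deduce the bound directly from two earlier results: Theorem~\ref{thm:nice}, which converts any CM relative complex into a fully CM one at the cost of a few extra ground-set elements, and Theorem~\ref{thm:relM}, which pins down exactly when a prescribed $h$-vector is realized by a fully CM relative complex on a given ground set. The first thing I would record is that the auxiliary sequences $(a_0,\dots,a_{d-1})$ and $(b_0,\dots,b_{d-1})$ of Theorem~\ref{thm:relM}(c) are functions of $h$ alone: they depend neither on a choice of presentation nor on the size of the ambient ground set. So once $h = (0,h_1,\dots,h_d)$ is fixed, these sequences are fixed, and likewise $d$ is determined --- it equals $\dim\Psi + 1$, and $\dim\Psi$ depends only on the underlying face set of $\Psi$.

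Next I would take a CM relative complex $\Psi$ on $[n]$ with $h(\Psi) = h$, so that $\dim\Psi = d-1$, and apply Theorem~\ref{thm:nice}: $\Psi$ admits a presentation $(\Delta,\Gamma)$ with $\Gamma \subset \Delta \subseteq 2^{[n']}$, $\dim\Gamma = \dim\Psi$, and $\Psi$, $\Delta$, $\Gamma$ all Cohen--Macaulay, for some $n' \le n+d$. In other words, $\Psi$ is a fully CM relative complex on the ground set $[n']$; re-presenting $\Psi$ alters neither its faces nor its dimension, so its $h$-vector is unchanged. Then I would invoke the implication (a) $\Rightarrow$ (c) of Theorem~\ref{thm:relM}, applied with $n'$ in place of $n$, to conclude $a_0 \le n' - d$, whence $a_0 \le n' - d \le (n+d) - d = n$.

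I do not expect any genuine obstacle internal to this argument --- it is a short bookkeeping consequence of the two cited theorems, the only points requiring care being that the sequences $(a_i)$, $(b_i)$ really are presentation-independent and that $d$ is unchanged when the ground set grows. The substantive content sits upstream, in Theorem~\ref{thm:nice}, where one must construct, for an arbitrary $(d-1)$-dimensional CM relative complex, a presentation witnessing full Cohen--Macaulayness using at most $d$ new vertices; and it is exactly this constant $d$ that accounts for the gap between the necessary condition $a_0 \le n$ here and the exact condition $a_0 \le n-d$ of Theorem~\ref{thm:relM}.
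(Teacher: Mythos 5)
Your argument is correct and is essentially the one the paper intends: the introduction explicitly states that Corollary~\ref{cor:necessary} is derived from Theorem~\ref{thm:nice} (which bounds the ground-set growth by $d - e \le d$, since $e = \operatorname{depth} \kk[\Gamma] \ge 0$) combined with the implication (a)~$\Rightarrow$~(c) of Theorem~\ref{thm:relM} on the enlarged ground set. Your added remarks that the sequences $(a_i), (b_i)$ and the number $d$ depend only on $h$, and hence are unaffected by re-presenting $\Psi$ or enlarging the ground set, correctly fill in the bookkeeping.
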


We conjecture that it actually suffices to extend the ground set by a single
new vertex. This would strengthen the bound of Corollary~\ref{cor:necessary}
to $n-d+1$.

Finally, Theorem~\ref{thm:bjorner} gives a characterization of $h$-vectors of
relative multicomplexes if the dimensions of the minimal faces of $\Psi =
\Delta \setminus \Gamma$ are given. This resolves a question of A.~Bj\"orner
stated in~\cite{stanley87}. 

\bigskip

\textbf{Acknowledgments.} Research that led to this paper was supported by the
National Science Foundation under Grant No.~DMS-1440140 while the authors were
in residence at the Mathematical Sciences Research Institute in Berkeley,
California, during the Fall 2017 semester on \emph{Geometric and Topological
Combinatorics}.  G.C.~was also supported by the Center for International
Cooperation at Freie Universit\"at Berlin and the Einstein Foundation Berlin.
L.K.~was also supported by the DFG, grant KA 4128/2-1.  R.S.~was also
supported by the DFG Collaborative Research Center SFB/TR 109 ``Discretization
in Geometry and Dynamics''. We thank the two referees for helpful suggestions.

\section{\texorpdfstring{$f$}{f}-vectors of relative simplicial complexes}
\label{sec:f-rel}
\newcommand\f{\mathbf{f}}%
\newcommand\relBnd{\partial^\mathsf{rel}}%

\newcommand\F{\mathcal{F}}%
\newcommand\C[1]{\mathrm{C}{#1}}%
The proof of Theorem~\ref{thm:relKK} follows the same ideas as that of the classical Kruskal--Katona theorem given in~\cite[Sect.~8]{GK}.
A simplicial complex $\Delta \subset 2^{[n]}$ is called \Defn{compressed} if its set of $k$-faces forms an initial segment with respect to the reverse lexicographic order on the $(k+1)$-subsets of $[n]$, for each $k$.
Note that if $\Delta$ and $\Gamma$ are both compressed simplicial complexes and $f_k(\Gamma) \leq f_k(\Delta)$ for all $k$, then $\Gamma \subseteq \Delta$.
The Kruskal--Katona theorem now states that $f$ is the $f$-vector of a simplicial complex if and only if it is the $f$-vector of a compressed simplicial complex, which can be checked by numerical conditions.
\begin{thm}[{Kruskal~\cite{kruskal}, Katona~\cite{katona}}]\label{thm:KK}
	For a vector $f = (1,f_0,\dots,f_{d-1}) \in \Znn^{d+1}$, the following conditions are equivalent:
	\begin{enumerate}[\rm (a)]
		\item $f$ is $f$-vector of a simplicial complex;
		\item $f$ is $f$-vector of a compressed simplicial complex;
		\item $\partial_{k+1}(f_{k}) \le f_{k-1}$ for all $k \ge 1$.
	\end{enumerate}
\end{thm}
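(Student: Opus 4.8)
The plan is to follow the classical route for Kruskal--Katona, as in~\cite[Sect.~8]{GK}, organized around the \emph{shadow} operator: for a family $\F$ of $m$-element subsets of $[n]$, write $\partial\F$ for the family of all $(m-1)$-subsets contained in some member of $\F$. Two preliminary observations drive everything. First, a collection $\Delta$ of subsets of $[n]$ is a simplicial complex if and only if $\partial(\Delta^{(m)}) \subseteq \Delta^{(m-1)}$ for every $m$, where $\Delta^{(m)}$ denotes the set of $m$-element faces; this is an easy induction on codimension. Second, if $\mathcal{I}$ is an initial segment of size $r$ in the reverse lexicographic order on the $m$-subsets of $[n]$, then $\partial\mathcal{I}$ is again an initial segment, of size exactly $\partial_m(r)$; I will refer to this as $(\star)$, and it is a direct computation from the recursive description of the reverse lexicographic order, splitting $\mathcal{I}$ according to whether it uses the largest available element. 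In particular $(\star)$ also yields monotonicity, $r \le r' \Rightarrow \partial_m(r) \le \partial_m(r')$, since nested initial segments have nested shadows.

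The implication (b)$\Rightarrow$(a) is immediate, since a compressed complex is a complex. For (c)$\Rightarrow$(b), choose $n$ large enough that $\binom{n}{k} \ge f_{k-1}$ for all $k$, let $\Delta^{(k+1)}$ be the initial reverse-lex segment of size $f_k$ among the $(k+1)$-subsets of $[n]$, and set $\Delta := \{\emptyset\} \cup \bigcup_k \Delta^{(k+1)}$. By $(\star)$, $\partial(\Delta^{(k+1)})$ is the initial segment of size $\partial_{k+1}(f_k)$; since $\partial_{k+1}(f_k) \le f_{k-1}$ by hypothesis and initial segments are nested by size, $\partial(\Delta^{(k+1)}) \subseteq \Delta^{(k)}$. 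Hence $\Delta$ is a simplicial complex by the first observation, it is compressed by construction, and $f(\Delta) = f$.

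For (a)$\Rightarrow$(c), the one substantial ingredient is the Kruskal--Katona shadow inequality: for every family $\F$ of $m$-sets, $|\partial\F| \ge \partial_m(|\F|)$. Granting it, if $\Delta$ is a simplicial complex with $f_k = |\Delta^{(k+1)}|$, then $\partial(\Delta^{(k+1)}) \subseteq \Delta^{(k)}$ gives $\partial_{k+1}(f_k) \le |\partial(\Delta^{(k+1)})| \le f_{k-1}$. To prove the shadow inequality I would induct on $n$. Put $\F_0 := \{A \in \F : n \notin A\}$, regarded as a family of $m$-subsets of $[n-1]$, and $\F_1 := \{A \setminus \{n\} : A \in \F,\ n \in A\}$, regarded as a family of $(m-1)$-subsets of $[n-1]$. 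A direct check shows $\partial\F$ is the disjoint union of $\partial\F_0 \cup \F_1$ (the members avoiding $n$) and $\{B \cup \{n\} : B \in \partial\F_1\}$ (those containing $n$), so $|\partial\F| \ge \max(|\partial\F_0|,\,|\F_1|) + |\partial\F_1|$. Applying the inductive hypothesis to $\F_0$ and $\F_1$ reduces the claim to the purely numerical inequality $\max(\partial_m(a),\,b) + \partial_{m-1}(b) \ge \partial_m(a+b)$ for all $a,b \in \Znn$, which I would verify by a short induction using $(\star)$ and the monotonicity of $\partial_m$ (it is in fact tight when $\F$ is itself an initial segment).

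The main obstacle is exactly this final numerical inequality together with the binomial bookkeeping in $(\star)$: the cascade representations of $a$, $b$, and $a+b$ interact in an elementary but delicate way, and keeping the recursion's edge cases straight (e.g.\ $\F_1 = \emptyset$, or $a+b$ landing on a value of the form $\binom{c}{m}$) is where care is needed. An alternative to induction on $n$ is the compression approach: iterate the operators $C_{ij}$ replacing $A \ni j$, $A \not\ni i$ by $(A \setminus \{j\}) \cup \{i\}$ whenever the latter is not already present, show each step preserves $|\F|$ and does not increase $|\partial\F|$, and then compare a fully compressed family with the initial segment of the same size. This trades the numerical inequality for a ``straightening'' argument; either way, the combinatorial heart of the proof is the statement that initial reverse-lex segments minimize the shadow among families of a given size.
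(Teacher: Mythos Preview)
The paper does not give its own proof of this theorem: it is quoted as the classical Kruskal--Katona result, with a pointer to~\cite[Sect.~8]{GK} for details, and is then used as a black box in the proof of Theorem~\ref{thm:relKK}. Your sketch is a correct outline of precisely that classical argument (shadow inequality via induction on $n$, or alternatively via elementary compressions), so there is nothing to compare against beyond noting that you and the paper defer to the same source.
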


The shadow of a family of $k$-sets consists of all
$(k-1)$-subsets of the $k$-sets of the family. The
Kruskal-Katona theorem tells us that $\partial_{k+1}(r)$ is
the minimum size of the shadow of a family $k$-sets of size
$r$. Actually, this minimum is always achieved if the family
is compressed. Note that this implies in particular that the
functions $\partial_k$ are monotone.

With these preparations, we can now give the proof of our Theorem~\ref{thm:relKK}.

\begin{proof}[Proof of Theorem~\ref{thm:relKK}]
Let us recall the definition of the sequences $(a_0,\dots,a_{d-1})$ and $(b_0,\dots,b_{d-1})$.
We have that
$a_{d-1} = f_{d-1}$, $b_{d-1} =0$ and 
\begin{align*}
    a_{k-1} &\ = \ \max(\partial_{k+1}(a_{k}), f_{k-1} + \partial_{k+1}(b_{k}) ) &=&\ \partial_{k+1}(a_{k}) + \max(0, f_{k-1} - (\partial_{k+1}(a_{k})-  \partial_{k+1}(b_{k})) );  \\
    b_{k-1} &\ = \  \max(\partial_{k+1}(b_{k}), \partial_{k+1}(a_{k})-f_{k-1}
    ) &=&\ \partial_{k+1}(b_{k}) + \max(0 ,  (\partial_{k+1}(a_{k})-
    \partial_{k+1}(b_{k})) - f_{k-1}  ) ,
\end{align*}
for $1 \leq k \leq d-1$.
From the second expression for $a_{k-1}$ and $b_{k-1}$ it is easy to see that $a_{k-1} - b_{k-1} = f_{k-1}$.
In particular, we have that $a_k \geq b_k$ for $k \geq 0$.

We now show the sufficiency of the condition, so assume that $a_0 \leq n$.  As
both sequences $(1,a_0,\dots,a_{d-1})$ and $(1,b_0,\dots,b_{d-1})$ satisfy the
condition of the Kruskal-Katona theorem (Theorem~\ref{thm:KK}), there exist
compressed simplicial complexes $\Gamma, \Delta \subset 2^{[n]}$ whose
respective $f$-vectors equal the two sequences.  In particular, since both
complexes are compressed and $f_k(\Gamma) = b_k \leq a_k = f_k(\Delta)$, it
holds that $\Gamma \subset \Delta$, and the relative complex
$\Psi:=(\Delta,\Gamma)$ has $f$-vector $f$.

Now we turn to the necessity of our condition.  Assume that we are given a
relative complex $\Psi = (\Delta, \Gamma)$ on the ground set $[n]$ with
$f(\Psi) = f$.  We show by induction on $k$ that $a_k \leq f_k(\Delta)$ and
$b_k \leq f_k(\Gamma)$ for $k \geq 0$.

The base case $k = d-1$ is obvious.  For the inductive step, it follows from
Theorem~\ref{thm:KK} that $f_{k-1}(\Delta) \geq \partial_{k+1}(f_{k}(\Delta))$,
and further $f_{k}(\Delta) \geq a_k$ implies that $\partial_{k+1}(f_{k}(\Delta))
\geq \partial_{k+1}(a_k)$.  Similarly, it holds that $f_{k-1}(\Delta) = f_{k-1}
+ f_{k-1}(\Gamma) \geq f_{k-1} + \partial_{k+1}(f_{k}(\Gamma)) \geq f_{k-1} +
\partial_{k+1}(b_k)$. Together, this implies that 
\[ 
    f_{k-1}(\Delta) \ \geq \ \max(\partial_{k+1}(a_k), f_{k-1} +
    \partial_{k+1}(b_k)) \ = \ a_{k-1} \, .
\]
Moreover, the last inequality together with the fact that $f_{k-1}(\Delta) -
f_{k-1}(\Gamma) = a_{k-1} - b_{k-1}$ implies that $f_{k-1}(\Gamma) \geq
b_{k-1}$.  In particular, $a_0 \leq f_0(\Delta) \leq n$.
\end{proof}

\section{\texorpdfstring{$f$}{f}-vectors of relative multicomplexes}
\label{sec:f-rel-mult}%
\newcommand\tDelta{\widetilde{\Delta}}%
\newcommand\tGamma{\widetilde{\Gamma}}%
\newcommand\tPsi{\widetilde{\Psi}}%
\newcommand\tF{\widetilde{\F}}%
A \Defn{$\boldsymbol k$-multiset} is a set with repetitions
allowed. 
A \Defn{multicomplex} $\tDelta$ is a collection of multisets closed under taking (multi-)subsets.
We denote a $k$-multisubset of $[n]$ by $F = \{s_1, s_2, \dots,s_k\}_\le$ where $1 \le s_1 \le s_2 \le\cdots \le s_k \le n$.
We say that the dimension of $F$ is $k-1$ and in the same way as for simplicial complexes, one defines $f$-vectors of multicomplexes.
Note that multicomplexes can be infinite, even if the ground set is finite.

The sequences which arise as $f$-vectors of multicomplexes are called
\Defn{$\boldsymbol M$-sequences} and they have a well-known classification due
to Macaulay.  Namely, a sequence $(1, f_0, f_1, \dots)$ is an $M$-sequence if
and only if $f_{k-1} \geq \Partial_{k+1}(f_{k})$.  Moreover, as in the
simplicial case, for each $M$-sequence $f$ there exists a unique
\emph{compressed} multicomplex $\tDelta$ with $f = f(\tDelta)$. Here, being
compressed is defined as in the simplicial case.
We refer the reader to \cite[Sect.~8]{GK} of \cite[Sect.~II.2]{Stanley96} for
details.  

Using compressed multicomplexes and the characterization of $M$-sequences, the
same proof as for Theorem \ref{thm:relKK} also yields the following
characterization for $f$-vectors of finite proper relative multicomplexes
$\tPsi = (\tDelta,\tGamma)$.

\begin{thm}\label{thm:relKKm}
    Let $f = (0,f_{0},\dots,f_{d-1}) \in \Znn^{d+1}$ and $n > 0$ and define two
    sequences $(a_0,\dots,a_{d-1})$ and $(b_0,\dots,b_{d-1})$ by
    $a_{d-1} := f_{d-1}$ and $b_{d-1} :=0$ and continue recursively
    \begin{align*}
        a_{k-1} &\ := \ \max(\Partial_{k+1}(a_{k}), 
        f_{k-1} + \Partial_{k+1}(b_{k}) )   \\
        b_{k-1} &\ := \  \max(\Partial_{k+1}(b_{k}), 
  \Partial_{k+1}(a_{k})-f_{k-1}  ) 
    \end{align*}
    for $k \ge 0$.
    Then there is a proper (finite) relative multicomplex $\tPsi$ on the ground set $[n]$
    with $f = f(\tPsi)$ if and only if $a_0 \le n$.
\end{thm}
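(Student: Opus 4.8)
The plan is to mirror the proof of Theorem~\ref{thm:relKK} verbatim, substituting the Macaulay characterization of $M$-sequences for the Kruskal--Katona characterization of $f$-vectors of simplicial complexes. The only structural fact I need is that the three ingredients used in the simplicial argument all have multicomplex analogues: (i) compressed multicomplexes exist and are unique for each $M$-sequence; (ii) if $\tGamma$ and $\tDelta$ are compressed with $f_k(\tGamma) \le f_k(\tDelta)$ for all $k$, then $\tGamma \subseteq \tDelta$; and (iii) $\Partial_{k+1}(r)$ is the minimum size of the shadow of a family of $k$-multisets of size $r$, so that $\Partial_k$ is monotone and $f_{k-1}(\tDelta) \ge \Partial_{k+1}(f_k(\tDelta))$ for any multicomplex. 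All three are recalled in the paragraph preceding the statement, so they may be invoked directly.

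First I would record the purely arithmetic observation, exactly as in the proof of Theorem~\ref{thm:relKK}: rewriting the recursions as
\[
  a_{k-1} = \Partial_{k+1}(a_k) + \max\bigl(0,\ f_{k-1} - (\Partial_{k+1}(a_k) - \Partial_{k+1}(b_k))\bigr),
\]
\[
  b_{k-1} = \Partial_{k+1}(b_k) + \max\bigl(0,\ (\Partial_{k+1}(a_k) - \Partial_{k+1}(b_k)) - f_{k-1}\bigr),
\]
one sees immediately that $a_{k-1} - b_{k-1} = f_{k-1}$ for all $k$, and in particular $a_k \ge b_k \ge 0$ for all $k$. This step is identical to the simplicial case and carries over with no change.

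For sufficiency, assuming $a_0 \le n$: since $a_{k-1} \ge \Partial_{k+1}(a_k)$ and $b_{k-1} \ge \Partial_{k+1}(b_k)$ by construction, both $(1,a_0,\dots,a_{d-1})$ and $(1,b_0,\dots,b_{d-1})$ are $M$-sequences, so by Macaulay's theorem there are compressed multicomplexes $\tDelta, \tGamma \subseteq 2^{[n]}$ (as multicomplexes on $[n]$) with these $f$-vectors; here $a_0 \le n$ guarantees that $\tDelta$ lives on $n$ elements. By (ii) above and $f_k(\tGamma) = b_k \le a_k = f_k(\tDelta)$ we get $\tGamma \subseteq \tDelta$, and $f(\tPsi) = f(\tDelta) - f(\tGamma) = f$ for $\tPsi := (\tDelta, \tGamma)$; properness holds since $f_{-1} = a_{-1} - b_{-1}$ has $f_{-1}=0$ built in. For necessity, given any proper relative multicomplex $\tPsi = (\tDelta,\tGamma)$ on $[n]$ with $f(\tPsi) = f$, I would show by downward induction on $k$ that $a_k \le f_k(\tDelta)$ and $b_k \le f_k(\tGamma)$; the base case $k = d-1$ is clear, and the inductive step uses the monotonicity of $\Partial_{k+1}$ together with $f_{k-1}(\tDelta) \ge \Partial_{k+1}(f_k(\tDelta))$ and $f_{k-1}(\tGamma) \ge \Partial_{k+1}(f_k(\tGamma))$, exactly as in the simplicial proof, to deduce $f_{k-1}(\tDelta) \ge a_{k-1}$; then $f_{k-1}(\tDelta) - f_{k-1}(\tGamma) = f_{k-1} = a_{k-1} - b_{k-1}$ forces $f_{k-1}(\tGamma) \ge b_{k-1}$. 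Finiteness of $\tPsi$ is not actually used in this direction, but the theorem is stated for finite $\tPsi$, so no issue arises. Setting $k = 0$ gives $a_0 \le f_0(\tDelta) \le n$.

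I do not expect a genuine obstacle: the argument is a transcription. The one point worth a sentence of care is that $f_0(\tDelta) \le n$ in a multicomplex — unlike in a simplicial complex, $f_0$ counts singletons $\{s\}$ with $s \in [n]$, of which there are at most $n$, so the bound is still correct. A second minor point is to make sure the compressed multicomplex realizing an $M$-sequence $(1,a_0,\dots)$ with $a_0 \le n$ genuinely uses only the first $n$ elements of the ground set; this follows because the reverse-lexicographic initial segment of $1$-multisets of size $a_0$ is $\{\{1\},\dots,\{a_0\}\}$, and every higher-dimensional face of a compressed multicomplex is supported on the vertices it uses. With these remarks in place the proof closes, and I would simply write ``The proof is identical to that of Theorem~\ref{thm:relKK}, using Macaulay's theorem in place of Kruskal--Katona and noting that $f_0(\tDelta) \le n$ still bounds the ground set.''
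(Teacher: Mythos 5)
Your proposal is correct and coincides with the paper's own (implicit) proof: the paper states outright that ``the same proof as for Theorem~\ref{thm:relKK} also yields'' this theorem, and your write-up is exactly that transcription, with Macaulay's theorem and compressed multicomplexes replacing Kruskal--Katona and compressed simplicial complexes. The two supplementary observations you record — that $f_0(\tDelta) \le n$ still bounds the ground set for multicomplexes, and that a compressed multicomplex with $a_0 \le n$ vertices is supported on $[n]$ — are sensible sanity checks but not points of genuine difficulty. (One notational quibble: write that $\tDelta,\tGamma$ are multicomplexes on the ground set $[n]$ rather than $\tDelta,\tGamma \subseteq 2^{[n]}$, since multicomplexes are collections of multisets, not subsets of the power set.)
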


Now we turn to the classification of $f$-vectors of not necessarily finite multicomplexes.
In the proof of Theorem \ref{thm:relKK}, it was crucial that relative simplicial complexes have bounded dimension, so that we could proceed by induction from the top dimension downwards.
For general relative multicomplexes, we will instead proceed from dimension $0$ upwards.
This requires some new notation.
For $r,k \in \Znn$ with $k > 0$, let $r_k > \dots >
r_1 \ge 0$ as defined by~\eqref{eqn:binomial}. We define
\[
\upshad{r}{k} \ := \
\binom{r_{k}+1}{k+1} +
\binom{r_{k-1}+1}{k+2} +
\cdots +
\binom{r_{1}+1}{2} \, .
\]
It is not difficult to see that $\Partial_{k+1}(\upshad{r}{k}) = r$ and $\upshad{\Partial_{k}(r)}{k-1} \geq r$.
Therefore, $M$-sequences can be equivalently characterized as those sequences $(f_{-1}, f_0, \dotsc)$ which satisfy $f_{k+1} \geq \upshad{f_k}{k+1}$ for all $k$.

\begin{thm}\label{thm:relKKmi}
    Let $f = (0,f_{0},f_1,\dots)$ be a sequence of non-negative integers and
    $n > 0$ and define two sequences $(a_0,a_1, \dots)$ and $(b_0,b_1, \dots)$
    by $a_{0} := n$, $b_{0} := n - f_0$ and continue recursively
	\begin{align*}
	    a_{k+1} &\ := \ \min(\upshad{a_k}{k+1}, f_{k+1} + \upshad{b_k}{k+1})\\
	    b_{k+1} &\ := \ \min(\upshad{b_k}{k+1}, \upshad{a_k}{k+1} - f_{k+1} )
	\end{align*}
    for $k \ge 0$.  Then, there is a proper relative multicomplex $\tPsi$ on
    the ground set $[n]$ with $f = f(\tPsi)$ if and only if $b_k \geq 0$ for
    all $k \geq 0$.
\end{thm}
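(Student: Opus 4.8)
The plan is to mirror the structure of the proof of Theorem~\ref{thm:relKK}, but to work upward in dimension rather than downward, using the upper-shadow operator $\upshad{\cdot}{\cdot}$ in place of $\partial$. First I would record the elementary identity $a_k - b_k = f_k$ for all $k \ge 0$: this holds for $k=0$ by definition ($a_0 - b_0 = n - (n-f_0) = f_0$), and inductively one checks, exactly as in the simplicial case, that $\min(\upshad{a_k}{k+1}, f_{k+1} + \upshad{b_k}{k+1}) - \min(\upshad{b_k}{k+1}, \upshad{a_k}{k+1} - f_{k+1}) = f_{k+1}$ by writing both minima in the form ``$\upshad{b_k}{k+1} + \max(0, \cdots)$'' as is done for $\partial$ in the proof of Theorem~\ref{thm:relKK}. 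Since $f_{k+1} \ge 0$, this also gives $a_k \ge b_k$ whenever $b_k \ge 0$, so the condition ``$b_k \ge 0$ for all $k$'' is equivalent to ``$0 \le b_k \le a_k$ for all $k$''.

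For sufficiency, assume $b_k \ge 0$ for all $k$. I claim both sequences $(1, a_0, a_1, \dots)$ and $(1, b_0, b_1, \dots)$ are $M$-sequences. Using the characterization recalled just before the theorem, it suffices to show $a_{k+1} \ge \upshad{a_k}{k+1}$ and likewise for $b$; but these are immediate from the definitions as minima, one term of which is exactly $\upshad{a_k}{k+1}$ (resp.\ $\upshad{b_k}{k+1}$). One also needs to check the base of the $M$-sequence condition, i.e.\ that $a_0 \le \upshad{1}{0}$ — here I should verify $\upshad{1}{0}$ is large enough (it equals $\binom{2}{1} = 2$ if $n$'s binomial expansion in degree $0$ is $\binom{1}{1}$; more carefully, $a_0 = n$ and the degree-$0$ $M$-sequence condition is vacuous since $f_{-1}=1$ always bounds nothing from below, so in fact there is no constraint at the bottom and I just need $a_0, b_0 \ge 0$, which holds). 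Then Macaulay's theorem gives compressed multicomplexes $\tDelta$ and $\tGamma$ on $[n]$ with these $f$-vectors; since $f_k(\tGamma) = b_k \le a_k = f_k(\tDelta)$ and both are compressed, $\tGamma \subseteq \tDelta$, and $\tPsi := (\tDelta, \tGamma)$ has $f$-vector $f$. It is proper because $f_{-1}(\tDelta) = f_{-1}(\tGamma) = 1$, so $f_{-1}(\tPsi) = 0$.

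For necessity, suppose $\tPsi = (\tDelta, \tGamma)$ is a relative multicomplex on $[n]$ with $f(\tPsi) = f$. I would show by induction on $k$ that $f_k(\tDelta) \le a_k$ and $f_k(\tGamma) \le b_k$ (and in particular $b_k \ge f_k(\tGamma) \ge 0$). The base case $k=0$: $f_0(\tDelta) \le n = a_0$ since $\tDelta$ lives on $[n]$, and then $f_0(\tGamma) = f_0(\tDelta) - f_0 \le n - f_0 = b_0$. For the inductive step, since $\tDelta$ is a multicomplex, the $M$-sequence inequality in its upward form gives $f_{k+1}(\tDelta) \le \upshad{f_k(\tDelta)}{k+1} \le \upshad{a_k}{k+1}$, using monotonicity of $\upshad{\cdot}{k+1}$ together with $f_k(\tDelta) \le a_k$. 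Likewise $f_{k+1}(\tDelta) = f_{k+1} + f_{k+1}(\tGamma) \le f_{k+1} + \upshad{f_k(\tGamma)}{k+1} \le f_{k+1} + \upshad{b_k}{k+1}$. Combining, $f_{k+1}(\tDelta) \le \min(\upshad{a_k}{k+1}, f_{k+1} + \upshad{b_k}{k+1}) = a_{k+1}$, and then $f_{k+1}(\tGamma) = f_{k+1}(\tDelta) - f_{k+1} \le a_{k+1} - f_{k+1} = b_{k+1}$ (using $a_{k+1} - b_{k+1} = f_{k+1}$).

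The main obstacle I anticipate is establishing monotonicity of the upper-shadow operator $\upshad{\cdot}{k}$ and the two asserted identities $\Partial_{k+1}(\upshad{r}{k}) = r$ and $\upshad{\Partial_k(r)}{k-1} \ge r$ — these are stated in the excerpt as ``not difficult to see'' but they are exactly what powers the equivalence between the ``downward'' ($\Partial$) and ``upward'' ($\tilde\partial$) formulations of the $M$-sequence condition, and hence what lets the induction run upward. The cleanest route is probably to interpret $\upshad{r}{k}$ combinatorially as the minimum size of the upper shadow (the set of $(k+1)$-multisets containing some member of a given family of $r$ $k$-multisets) of a compressed family, dual to the Kruskal--Katona/Macaulay reading of $\Partial$; once that is in hand, monotonicity and the identities are formal, and the rest of the argument is a routine transcription of the Theorem~\ref{thm:relKK} proof with ``$\le$'' and ``$\min$'' replacing ``$\ge$'' and ``$\max$''.
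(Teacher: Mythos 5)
Your proposal is correct in structure and follows exactly what the paper indicates for this result: run the same recursive comparison argument as in Theorem~\ref{thm:relKK}, but go upward in degree, replace the lower-shadow operator $\widetilde{\partial}_{k+1}$ by the upper-shadow operator $\tilde{\partial}^{k+1}$, and replace $\max$ by $\min$. Your necessity argument (induction upward from $k=0$, base case from $f_0(\tDelta)\le n$, inductive step from monotonicity of $\tilde{\partial}^{k+1}$) is exactly right.

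One step in your sufficiency paragraph needs fixing. You assert that it suffices to show $a_{k+1} \ge \tilde{\partial}^{k+1}(a_k)$ and that this ``is immediate from the definitions as minima.'' But a minimum with $\tilde{\partial}^{k+1}(a_k)$ as one of its arguments satisfies $a_{k+1}\le \tilde{\partial}^{k+1}(a_k)$, the \emph{opposite} inequality. In fact $\le$ is the inequality you need: the upward form of the $M$-sequence condition is $f_{k+1}\le \tilde{\partial}^{k+1}(f_k)$, since Macaulay gives an \emph{upper} bound on $f_{k+1}$ in terms of $f_k$. The ``$\ge$'' in the paper's sentence immediately preceding the theorem is a typo which you have inherited; your own necessity argument already uses the correct $\le$ form, so the write-up is internally inconsistent. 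With this sign corrected, the deduction from the $\min$-definition goes through as you intended. You should also record explicitly the two easy facts that make the construction legitimate: $a_k = b_k + f_k \ge 0$, so both $(1,a_0,\dots)$ and $(1,b_0,\dots)$ are nonnegative $M$-sequences whenever $b_k\ge 0$; and $a_0=n\ge b_0$ ensures the two compressed multicomplexes produced by Macaulay's theorem actually live on the ground set $[n]$. Modulo these edits, your argument is a complete and correct rendering of the proof the paper only sketches.
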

The proof is almost the same as the proof of Theorem~\ref{thm:relKK}, using the characterization of $M$-sequences in terms of $\tilde{\partial}^{k}$.
The only difference is that to prove necessity, one needs to start
the induction at $k=0$ and proceed in increasing order.

The classical theorem by Macaulay characterizes Hilbert functions of standard
graded algebras, and Theorem \ref{thm:relKKmi} has a similar interpretation.
We denote the Hilbert function of a finitely generated graded module $M$ over the polynomial ring $\kk[x_1,\dotsc,x_n]$ by $H(M, k) := \dim_\kk M_k$.
\begin{cor}[Macaulay for quotients of ideals]\label{cor:relKKm}
    Let $H : \Znn \to \Znn$ with $H(0) = 0$ and $n \geq H(1)$.  Furthermore, let
    $(a_0,a_1, \dots)$ and $(b_0,b_1, \dots)$ be the two sequences of
    Theorem~\ref{thm:relKKmi}, where we set $f_k = H(k+1)$.  Then, there exist
    two proper homogeneous ideals $J \subset I \subsetneq
    \kk[x_1,\dotsc,x_n]$ with $H(k) = H(I/J,k)$ for all $k$, if and only if
    $b_k \geq 0$ for all $k \geq 0$.
\end{cor}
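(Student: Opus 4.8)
The plan is to deduce Corollary~\ref{cor:relKKm} from Theorem~\ref{thm:relKKmi} by translating freely between homogeneous ideals and monomial multicomplexes. First I would recall the standard correspondence: for a homogeneous ideal $I \subsetneq S := \kk[x_1,\dots,x_n]$, passing to an initial ideal with respect to any term order does not change the Hilbert function, so $H(S/I, k)$ equals the number of monomials of degree $k$ outside a monomial ideal. The monomials outside a monomial ideal, identified with multisubsets of $[n]$ via exponent vectors, form a (possibly infinite) multicomplex $\tDelta$ on ground set $[n]$, and $f_{k-1}(\tDelta) = H(S/\operatorname{in} I, k) = H(S/I,k)$. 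Conversely, every multicomplex on $[n]$ arises this way, with $f_{-1} = 1$ corresponding to the requirement that the ambient ideal be proper. Thus a pair of homogeneous ideals $J \subset I \subsetneq S$ gives, after taking initial ideals compatibly (e.g.\ $\operatorname{in} J \subseteq \operatorname{in} I$ for a fixed term order, since $J\subseteq I$), a pair of multicomplexes $\tGamma \subseteq \tDelta$, hence a relative multicomplex $\tPsi = (\tDelta, \tGamma)$ with $f_{k-1}(\tPsi) = H(I/J, k)$ for all $k$; and $J$ proper (i.e.\ $J \ne S$, equivalently $\tGamma$ nonvoid) forces $f_{-1}(\tPsi) = f_{-1}(\tDelta) - f_{-1}(\tGamma) = 1 - 1 = 0$, so $\tPsi$ is proper. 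Conversely, a proper relative multicomplex $\tPsi = (\tDelta,\tGamma)$ on $[n]$ yields monomial ideals $J := (\text{monomials not in }\tGamma) \subset I := (\text{monomials not in }\tDelta)$ — here $\tGamma \subseteq \tDelta$ makes $J \subseteq I$, and $\tDelta$ nonvoid makes $I$ proper — with $H(I/J, k) = f_{k-1}(\tPsi)$.

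With this dictionary in place, the corollary is just a restatement of Theorem~\ref{thm:relKKmi}. Concretely, given $H : \Znn \to \Znn$ with $H(0) = 0$ and $n \ge H(1)$, set $f_{-1} := 0$ and $f_k := H(k+1)$ for $k \ge 0$, so that $f = (0, f_0, f_1, \dots)$ is a sequence of non-negative integers with $f_0 = H(1) \le n$. The existence of proper homogeneous ideals $J \subset I \subsetneq S$ with $H(I/J, k) = H(k)$ for all $k$ is equivalent, by the previous paragraph, to the existence of a proper relative multicomplex $\tPsi$ on $[n]$ with $f(\tPsi) = f$, which by Theorem~\ref{thm:relKKmi} holds if and only if $b_k \ge 0$ for all $k \ge 0$, where $(a_k)$ and $(b_k)$ are defined as in that theorem with this choice of $f$. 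This is exactly the claimed condition, so the proof is complete. One small point worth checking is that $b_0 = n - f_0 = n - H(1) \ge 0$ is automatic from the hypothesis $n \ge H(1)$, so this sign condition at $k = 0$ is never the obstruction, consistent with the fact that we only ever assumed $H(1) \le n$.

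The only genuine subtlety — and the step I would write out with a little care — is the compatibility of initial ideals in the forward direction: starting from $J \subseteq I$, I want monomial ideals that are simultaneously the initial ideals of $J$ and of $I$ for one fixed term order $\prec$. This is immediate: for any single term order $\prec$, one has $\operatorname{in}_\prec(J) \subseteq \operatorname{in}_\prec(I)$ because a leading monomial of an element of $J$ is in particular the leading monomial of an element of $I$; and Hilbert functions are preserved, $H(S/\operatorname{in}_\prec(J),k) = H(S/J,k)$ and likewise for $I$, whence $H(I/J,k) = H(S/J,k) - H(S/I,k) = H(S/\operatorname{in}_\prec J, k) - H(S/\operatorname{in}_\prec I, k)$, which is the $f$-vector difference of the associated multicomplexes $\tGamma \subseteq \tDelta$. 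So there is no real obstacle here; the corollary is essentially a packaging of Theorem~\ref{thm:relKKmi} through the monomial-ideal/multicomplex dictionary, and the proof amounts to making that dictionary precise and checking that the properness and ground-set conditions match up on both sides.
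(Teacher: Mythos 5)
Your proof takes essentially the same route as the paper: the dictionary between homogeneous ideals and multicomplexes via initial ideals (with the key observations that passing to $\operatorname{in}_\prec$ preserves both inclusions and Hilbert functions), reducing the corollary to a restatement of Theorem~\ref{thm:relKKmi}. One small slip to flag: in your converse direction you define $J := (\text{monomials not in }\tGamma)$ and $I := (\text{monomials not in }\tDelta)$ and then claim $\tGamma \subseteq \tDelta$ gives $J \subseteq I$, but with those definitions the inclusion goes the other way ($\tGamma \subseteq \tDelta$ means more monomials lie outside $\tGamma$, so $J \supseteq I$); you should instead set $J := (\text{monomials not in }\tDelta)$ and $I := (\text{monomials not in }\tGamma)$, which also makes the Hilbert function identity $H(I/J,k) = f_{k-1}(\tDelta) - f_{k-1}(\tGamma) = f_{k-1}(\tPsi)$ come out correctly. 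The corresponding sentence in the forward direction (linking $J$ proper with $\tGamma$ nonvoid) has a similar mismatch, though since $J\subset I\subsetneq S$ forces both $\tDelta$ and $\tGamma$ to be nonvoid, the conclusion that $\tPsi$ is proper is still correct. These are labeling slips, not mathematical gaps.
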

\begin{proof}
	Consider a homogeneous ideal $I \subseteq \kk[x_1,\dotsc,x_n]$.
	For any fixed term order $\preceq$, the collection of standard monomials, that is, the monomials not contained in the initial ideal of $I$ with respect to $\preceq$, is naturally identified with a multicomplex $\tDelta$.
	Since the standard monomials form a vector space basis of $\kk[x_1,\dotsc,x_n]/I$ that respects the grading, the $f$-vector of $\tDelta$ coincides with the Hilbert function of $\kk[x_1,\dotsc,x_n]/I$.
	Moreover, if $J \subseteq I \subseteq \kk[x_1,\dotsc,x_n]$ are two homogeneous ideals, then passing to the initial ideals (with respect to $\preceq$) preserves the inclusion.
	Therefore, any Hilbert function of a quotient of ideals also arises as $f$-vector of a relative multicomplex.
	
	For the converse we associate to any multicomplex $\tDelta$ the monomial ideal corresponding to all multisets not in $\tDelta$.
\end{proof}

\section{\texorpdfstring{$h$}{h}-vectors of relative Cohen-Macaulay complexes}\label{sec:macaulay}

Let $\Psi = (\Delta,\Gamma)$ be a $(d-1)$-dimensional relative simplicial
complex and let $\sigma_1,\dots,\sigma_m$ be some ordering of the
inclusion-maximal faces (i.e., the facets) of $\Psi$.  Define
\[
    \Psi_j \ := \ \left( 2^{\sigma_1} \cup 2^{\sigma_2} \cup \dots \cup
    2^{\sigma_j} \right) \cap (\Delta \setminus \Gamma) 
\]
for $j \ge 1$ and set $\Psi_0 := \emptyset$. We call the ordering of the facets a \Defn{shelling
order} if $\Psi_{j} \setminus \Psi_{j-1}$ has a unique inclusion-minimal
element $R(\sigma_j)$ for all $j=1,\dots,m$. Consequently, $\Psi$ is
\Defn{shellable} if it has a shelling order. If $\Gamma = \emptyset$ and hence
$\Psi$ is a simplicial complex, this recovers the usual notion of
shellability. The $h$-vector $h(\Psi)$ of a shellable relative complex has a
particularly nice interpretation:
\[
    h_i(\Psi) \ = \ |\{ j : |R(\sigma_j)| = i \}| \, ,
\]
for $0 \le i \le d$. It is shown in~\cite[Sect.~III.7]{Stanley96} that a shellable
relative complex is Cohen--Macaulay but the converse does not need to hold.

We will call a relative complex $\Psi$ \Defn{fully shellable}  if it has a
presentation $\Psi = (\Delta,\Gamma)$ such that $\dim \Psi = \dim \Gamma$ and
$\Psi$ as well as $\Delta$ and $\Gamma$ are shellable. By the above remarks,
it is clear that fully shellable relative complexes are fully Cohen--Macaulay
and, again, the converse does not necessarily hold.

In light of Theorem~\ref{thm:relKKm}, condition (c) of
Theorem~\ref{thm:relM} states that $h$ is the $f$-vector of a proper relative
multicomplex. In order to prove the implication (c) $\Longrightarrow$ (b), we
will show that for every relative multicomplex on the ground set $[n-d]$ with given
$f$-vector $h = (0,h_1,\dots,h_d)$, there is a fully shellable relative
complex $\Psi$ with $h(\Psi) = h$. 

Let $\tPsi = (\tDelta,\tGamma)$ be a proper relative
$(d-1)$-dimensional multicomplex on ground set $[n-d]$ and
assume that $\tDelta$ and $\tGamma$ are compressed.  To turn
$\tPsi$ into a relative complex, we follow the construction
in~\cite{BFS}.  Order the collection of multisets of size
$\leq d$ on the ground set $[n-d]$ by graded reverse
lexicographic order, and the collection of $d$-sets on $[n]$
by reverse lexicographic order.  There is a unique bijection
$\Phi_d$ between these two collections which preserves the
given orders. Explicitly, the map is 
\[
    F = \{b_1, b_2, \dots, b_k \}_{\le} \ \mapsto \ \Phi_d(F) \ := \
    \{1,2,\dots,d-k, b_1 + d-k+1, b_2 + d-k+2, \dots, b_k + d \} \, .
\]
We denote by $\Delta$ the simplicial complex with
facets $\{ \Phi_d(F) : F \in \tDelta\}$ and $\Gamma$ likewise. Since
$\tGamma$ is a submulticomplex of $\tDelta$, it follows that $\Gamma \subset
\Delta$ and $\Psi = (\Delta,\Gamma)$ is a relative complex with $\dim \Psi =
\dim \Delta = \dim \Gamma = d - 1$.

\begin{prop}\label{prop:BFSrelShell}

    Let $\tPsi = (\tDelta,\tGamma)$ be a $(d-1)$-dimensional relative
    multicomplex such that $\tDelta$ and $\tGamma$ are compressed. Let $\Psi =
    (\Delta, \Gamma)$ be the corresponding relative simplicial complex
    constructed above. Given  an ordering $\prec$ of the faces of $\tDelta$
    such that $F \prec F'$ whenever $|F| < |F'|$, the induced ordering on the
    facets $\Phi_d(F)$ of $\Delta$ is a shelling order for $\Delta$, $\Gamma$,
    and $\Psi$.
\end{prop}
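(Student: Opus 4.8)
The plan is to verify the shelling condition directly from the explicit form of $\Phi_d$ and the compressedness of $\tDelta$ and $\tGamma$. Recall that an ordering of the facets $\sigma_1,\dots,\sigma_m$ of a pure $(d-1)$-complex is a shelling precisely when, for each $j$, the set $\Psi_j \setminus \Psi_{j-1}$ has a unique inclusion-minimal element $R(\sigma_j)$; equivalently, the collection of faces of $\sigma_j$ not contained in $\sigma_1 \cup \dots \cup \sigma_{j-1}$ is an interval $[R(\sigma_j), \sigma_j]$ in the Boolean lattice $2^{\sigma_j}$. First I would treat the ambient simplicial complex $\Delta$ (the case $\Gamma = \emptyset$): fix $F \in \tDelta$ and $\sigma = \Phi_d(F)$, and consider a facet $\sigma' = \Phi_d(F')$ appearing earlier in the order, i.e. $F' \prec F$. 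The key computational lemma is that for $F' \prec F$ the intersection $\sigma \cap \Phi_d(F')$ is obtained from $\sigma$ by deleting a single element — or more precisely, that the ``old'' faces of $\sigma$ (those lying in some earlier facet) are exactly the faces of $\sigma$ missing some specific element $x(F) \in \sigma$, so that $R(\sigma) = \{x(F)\}$ has size $1$ when $F \neq F'$, and $|R(\Phi_d(F))| = |F|$ more generally after accounting for all multisets. This is precisely the computation carried out for simplicial complexes in~\cite[proof of Thm.~3.3]{BFS}; the map $\Phi_d$ above is theirs, and the argument that the graded-revlex order on multisets of size $\le d$ pulls back to a shelling of $\Delta$ under $\Phi_d$ is verbatim their argument. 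So for $\Delta$ I would simply cite~\cite{BFS}, recording that $|R(\Phi_d(F))| = |F|$, which also re-proves that $h_i(\Delta)$ equals the number of $F \in \tDelta$ with $|F| = i$.

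Next I would handle $\Gamma$: since $\tGamma$ is itself a compressed multicomplex (being compressed as a standalone complex, which is part of the hypothesis) and $\prec$ restricted to the faces of $\tGamma$ still refines the size order, the identical argument applies and the induced order on the facets of $\Gamma$ is a shelling, with $|R_\Gamma(\Phi_d(F))| = |F|$ for $F \in \tGamma$. The one subtlety is that the restriction face $R_\Gamma(\sigma)$ computed inside $\Gamma$ must agree with the restriction face $R_\Delta(\sigma)$ computed inside $\Delta$; this holds because both are determined by the \emph{same} rule ``delete from $\sigma = \Phi_d(F)$ the element(s) forced by the position of $F$ in graded-revlex order'', a rule that depends only on $F$ and not on the ambient complex — here it is essential that $\tGamma$ is a \emph{sub}multicomplex of $\tDelta$ and that $\Gamma$ is built from the \emph{same} map $\Phi_d$, so the shelling of $\Gamma$ is literally a subsequence-compatible restriction of that of $\Delta$.

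Finally, for the relative complex $\Psi = (\Delta,\Gamma)$: the facets of $\Psi$ are the facets $\Phi_d(F)$ with $F \in \tDelta \setminus \tGamma$, ordered by the restriction of $\prec$. Because $\Psi_j = \Delta_j \cap (\Delta \setminus \Gamma)$, the face set of $\Psi_j \setminus \Psi_{j-1}$ is $(\Delta_j \setminus \Delta_{j-1}) \setminus \Gamma = [R(\sigma_j), \sigma_j] \setminus \Gamma$, and since $\Gamma$ is a simplicial complex this is a down-set-complement inside the interval $[R(\sigma_j),\sigma_j]$, hence again an interval with the \emph{same} unique minimal element $R(\sigma_j)$ (one uses that $\sigma_j \notin \Gamma$, so the interval is nonempty, and that removing a simplicial subcomplex from a Boolean interval either empties it or leaves its bottom element intact). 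Thus $\prec$ shells $\Psi$ with the same restriction faces, giving $h_i(\Psi) = |\{F \in \tDelta\setminus\tGamma : |F| = i\}| = h_i(\tDelta) - h_i(\tGamma) = h_i$ as desired. The main obstacle is the first step — extracting from~\cite{BFS} the precise statement that $\Phi_d$ sends the graded-revlex order to a shelling with $|R(\Phi_d(F))| = |F|$, and checking that their hypothesis (compressedness) is exactly what we assume; once that is in hand, the passage to $\Gamma$ and to the relative complex $\Psi$ is formal.
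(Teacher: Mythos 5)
Your treatment of $\Delta$ and $\Gamma$ separately (citing~\cite{BFS} and noting that $\tGamma$ is itself compressed) matches the paper. The gap is in the step for $\Psi$. You correctly reduce to showing that $\Psi_j \setminus \Psi_{j-1} = [R(\sigma_j),\sigma_j] \setminus \Gamma$ has a unique minimal element, but then you assert as a general fact that ``removing a simplicial subcomplex from a Boolean interval either empties it or leaves its bottom element intact.'' This is false. The set $[R,\sigma]\setminus\Gamma$ is the complement of a down-set inside a Boolean interval, i.e.\ an up-set of that interval, and an up-set of a Boolean lattice need not be an interval and need not have a unique minimal element. For a concrete counterexample, take $\sigma=\{1,2,3\}$, $R=\{1\}$, and $\Gamma$ the $0$-skeleton $\{\emptyset,\{1\},\{2\},\{3\}\}$: then $[R,\sigma]\setminus\Gamma = \{\{1,2\},\{1,3\},\{1,2,3\}\}$, which has two minimal elements. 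Having $\sigma\notin\Gamma$ only guarantees the top of the interval survives, not the bottom.

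What is actually needed, and what the paper proves, is the sharper statement $R(\sigma)\notin\Gamma$ for every facet $\sigma$ of $\Psi$. Once one has that, the full interval $[R(\sigma),\sigma]$ survives (since $\Gamma$ is downward closed) and the shelling of $\Psi$ inherits the restriction faces from $\Delta$ verbatim, giving $h_i(\Psi)=|\{F\in\tDelta\setminus\tGamma : |F|=i\}|$. Proving $R(\sigma)\notin\Gamma$ is the real content of Proposition~\ref{prop:BFSrelShell}, and it uses the compressedness of $\tGamma$ in an essential way: one shows that every facet $\Phi_d(F')$ of $\Delta$ containing $R(\sigma)$ satisfies $|F'|\geq|F|$, finds the reverse-lexicographically smallest such $F'$ and shows it is a superset of $F$ (hence outside $\tGamma$), and then uses compressedness of $\tGamma$ to push this to all larger $F'$. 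Your plan omits this argument entirely, replacing it with the incorrect general claim, so as written the proof does not go through.
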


\begin{proof}
    It was shown in~\cite{BFS} that any such ordering gives a shelling order for
    $\Delta$ with restriction sets
    \[
        R(\sigma) \ = \ \sigma \setminus \{1,2,\dots,d-k\} \ = \ \{s_1 +
        d-k+1,\dots,s_k + d\}
    \]
    if $\sigma = \Phi_d(\{s_1,\dots,s_k\}_\le)$.
    We are left to prove that restricting this order
    to the facets of $\Delta \setminus \Gamma$ yields a shelling order for
    $\Psi$. It suffices to show that if $\sigma$ is a facet of $\Psi$, i.e., a
    facet of $\Delta$ not contained in $\Gamma$, then $R(\sigma) \not\in
    \Gamma$.

    Let $F = \{s_1, \dots, s_k\}_{\leq}$ be the face of $\tDelta$ such that
    $\sigma = \Phi_d(F)$. We will show that any facet $\sigma'$ of $\Delta$
    which contains $R := R(\sigma)$ does not belong to $\Gamma$. By
    construction, the facets of $\Gamma$ are a subset of the facets of $\Delta$,
    and thus  $R \notin \Gamma$.

    Let $\sigma'$ be a facet of $\Delta$ which contains $R$ and let $F'$ be
    the corresponding element of $\tDelta$ with $\sigma' = \Phi_d(F')$.
    Observe that either $\sigma' = \sigma$ or $t = |F'| > |F| = k$. Indeed, if $t<k$,
    $\{1, 2, \dots, d-k+1\} \subseteq \sigma'$, and since $R \cap \{1, 2,
    \dots, d-k+1\} = \emptyset$, $R$ cannot be a subset of $\sigma'$.
    If $t=k$, then $\sigma' \supseteq R$ implies $\sigma' = \sigma$.

    So, let us assume that $t > k$. Let $G = \{r_1, \dots, r_t\}_{\leq}$ be
    the smallest $t$-multiset in $\tDelta$ in reverse lexicographic order such that
    $\tau = \Phi_d(G) \supseteq R$. Now $\tau = \{1, \dots, d-t\} \cup S$,
    with $S= \{d-t +1 +r_1, \dots, d+r_t\}$. As before, observe that $R \cap
    \{1, \dots, d-t\} = \emptyset$. Since $\Phi_d$ preserves the reverse
    lexicographic order on $t$-multisets, $S$ is also minimal with respect to
    reverse lexicographic order. Therefore the elements of $R$ are the largest
    elements in $S$ and 
    \[
        G \ = \ \{\underbrace{1, \dots, 1}_{t-k}, s_1, \dots, s_k\}_{\leq}.
    \]
    Then $F = \{s_1, \dots, s_k\}_{\leq} \subseteq G$, and since $F\notin
    \tGamma$ and $\tGamma$ is a multicomplex, it follows that $G \notin
    \tGamma$. Since $\tGamma$ is compressed and $G$ is smaller than $F'$, $F'$
    also does not belong to $\tGamma$. This implies $\sigma \not \in \Gamma$.
\end{proof}

\begin{proof}[Proof of 
Theorem~\ref{thm:relM}: (c) $\Longrightarrow$ (b) $\Longrightarrow$ (a)]
By Theorem~\ref{thm:relKKm}, condition (c) guarantees the existence of a
proper relative multicomplex $\tPsi$ with $f$-vector $h$. 
By Proposition~\ref{prop:BFSrelShell}, the construction
above yields a fully shellable relative simplicial complex $\Psi$ 
with $h = h(\Psi)$.  This proves (c) $\Longrightarrow$ (b).
Theorem 2.5 for relative complexes in~\cite{Stanley96}
asserts that $\Psi$ is fully Cohen--Macaulay and hence proves 
(b) $\Longrightarrow$ (a).
\end{proof}

In order to prove the implication (a) $\Longrightarrow$ (c), we make use of
the powerful machinery of Stanley--Reisner modules. Let $\kk$ be an infinite
field. For a fixed $n > 0$, let $S := \kk[x_1,\dots,x_n]$ be the polynomial
ring. For a simplicial complex $\Delta \subseteq 2^{[n]}$, its
\Defn{Stanley--Reisner ideal} is $I_\Delta := \langle \x^\tau : \tau \not\in
\Delta \rangle$ and we write $\kk[\Delta] := S/I_\Delta$ for its
\Defn{Stanley--Reisner ring}. If $\Gamma \subset \Delta$ is a pair of
simplicial complexes, then $\kk[\Delta] \twoheadrightarrow \kk[\Gamma]$ and the
\Defn{Stanley--Reisner module} of $\Psi = (\Delta,\Gamma)$ is
\newcommand\SRmod{\mathrm{M}}
\[
    \SRmod[\Psi] \ := \ \ker( \kk[\Delta] \twoheadrightarrow \kk[\Gamma]) \ = \
    I_\Gamma / I_\Delta \, .
\]
This is a graded $S$-module and $\Psi$ is a \Defn{Cohen--Macaulay} relative
complex if $\SRmod[\Psi]$ is a Cohen--Macaulay module over $S$. In
particular, any choice of generic linear forms $\theta_1,\dots,\theta_d \in S$
for \mbox{$d = \dim \Psi + 1$}
is a regular sequence for $\SRmod[\Psi]$ and 
\[
    \dim_\kk ( \SRmod[\Psi] / \langle \theta_1,\dots,\theta_d \rangle \SRmod[\Psi])_i \ = \
    h_i(\Psi) \, ,
\]
for all $i \ge 0$.

\begin{proof}[Proof of Theorem~\ref{thm:relM}: (a) $\Longrightarrow$ (c)]
    Let $(\Delta,\Gamma)$ be a presentation of $\Psi$ such that $\dim \Gamma =
    \dim \Psi$ and $\Delta$ and $\Gamma$ are CM.  Consider the short exact
    sequence 
    \begin{equation}\label{eq:seq1}
		0 \ \to \ \SRmod[\Psi] \ \to \ \kk[\Delta] \ \to \ \kk[\Gamma] \
        \to \ 0
    \end{equation}
    of $S$-modules. Let $\theta \in S$ be a generic linear form.
    Tensoring~\eqref{eq:seq1} with $S / \theta$ yields
    \begin{equation}\label{eq:seq2}
        \Tor_1^S(\kk[\Gamma], S/\theta) \ \to \ \SRmod[\Psi]/\theta\SRmod[\Psi]
        \ \to \ \kk[\Delta]/ \theta\kk[\Delta] \ \to \ \kk[\Gamma]/
        \theta\kk[\Gamma] \ \to \ 0
    \end{equation}
    By resolving $S/\theta$, it is easy to see that $\Tor_1^S(\kk[\Gamma],
    S/\theta) = (0 :_{\kk[\Gamma]} \theta) = 0$, so \eqref{eq:seq2} is a short
    exact sequence as well.

    By our choice of presentation, $\kk[\Gamma]$ is Cohen--Macaulay and
    we may repeat the process for a full regular sequence $\Theta = (\theta_1,
    \dotsc, \theta_{d})$ to arrive at 
    \begin{equation}\label{eq:seq3}
        0 \ \to \ \SRmod[\Psi]/\Theta\SRmod[\Psi] \ \to \ \kk[\Delta]/
        \Theta\kk[\Delta] \ \to \ \kk[\Gamma]/ \Theta\kk[\Gamma] \ \to \ 0 \, .
    \end{equation}

    \newcommand\In{\mathrm{in}_\preceq}%
    Since $\Psi$ is Cohen--Macaulay, the Hilbert function of
    $\SRmod[\Psi]/\Theta\SRmod[\Psi]$ is exactly the $h$-vector of $\Psi$ and,
    moreover, we can identify $\SRmod[\Psi]/\Theta\SRmod[\Psi]$ with a graded
    ideal in $\kk[\Delta]/ \Theta\kk[\Delta]$. By a linear change of coordinates,
    this yields a pair of homogeneous ideals $J_\Delta \subset J_\Gamma
    \subset R := \kk[y_1,\dots,y_{n-d}]$ with difference of Hilbert functions
    exactly $h(\Psi)$. For any fixed term order $\preceq$, we denote by
    $\In(J_\Delta), \In(J_\Gamma)$ the corresponding initial ideals.  The
    passage to initial ideals leaves the Hilbert functions invariant and
    $\In(J_\Delta) \subseteq \In(J_\Gamma)$; c.f.~\cite[Prop.~9.3.9]{CLOS}.
    The corresponding collections of standard monomials are naturally
    identified with a pair of multicomplexes $\tGamma \subset \tDelta$  with
    $f$-vector $h$ and this completes the proof.
\end{proof}

\section{Cohen--Macaulay versus fully Cohen--Macaulay}
\label{sec:fully}%

Theorem~\ref{thm:relM} only addresses the characterization of $h$-vectors of
fully CM relative complexes. By definition, a relative simplicial complex
$\Psi$ is the set difference of a pair $\Gamma \subset \Delta \subseteq
2^{[n]}$ of simplicial complexes.  This presentation is by no means unique and
it is natural to ask if in the case that $\Psi$ is Cohen--Macaulay, there are
always CM complexes $\Gamma' \subseteq \Delta' \subseteq 2^{[n]}$ of dimension
$\dim \Psi$ such that $\Psi = \Delta' \setminus \Gamma'$. The following
example shows that this is not the case.

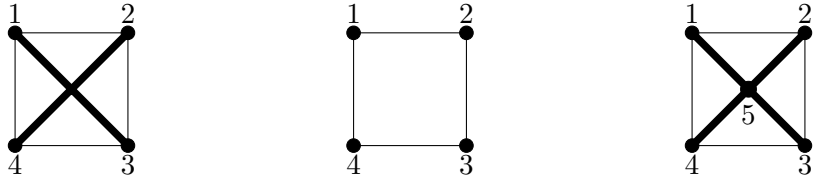
\begin{figure}[t]
	\begin{tikzpicture}[scale=1.5]
	\newcommand{\gammacolor}{gray!40}

	\begin{scope}
		\coordinate (v1) at (0,0);
		\coordinate (v2) at (1,0);
		\coordinate (v3) at (1,-1);
		\coordinate (v4) at (0,-1);
		
		\draw (v1)--(v2)--(v3)--(v4)--cycle;
		\draw[line width=1mm] (v1) -- (v3)  (v2) -- (v4);
		
		\foreach \p in {(v1),(v2),(v3),(v4)}
		\draw[fill=black] \p circle (0.06);
		
		\path (v1) node[anchor=south] {$1$};
		\path (v2) node[anchor=south] {$2$};
		\path (v3) node[anchor=north] {$3$};
		\path (v4) node[anchor=north] {$4$};
	\end{scope}
	
	\begin{scope}[xshift=3cm]
		\coordinate (v1) at (0,0);
		\coordinate (v2) at (1,0);
		\coordinate (v3) at (1,-1);
		\coordinate (v4) at (0,-1);
		
		\draw (v1)--(v2)--(v3)--(v4)--cycle;
		
		\foreach \p in {(v1),(v2),(v3),(v4)}
		\draw[fill=black] \p circle (0.06);
		
		\path (v1) node[anchor=south] {$1$};
		\path (v2) node[anchor=south] {$2$};
		\path (v3) node[anchor=north] {$3$};
		\path (v4) node[anchor=north] {$4$};
	\end{scope}
	
	\begin{scope}[xshift=6cm]
		\coordinate (v1) at (0,0);
		\coordinate (v2) at (1,0);
		\coordinate (v3) at (1,-1);
		\coordinate (v4) at (0,-1);
		\coordinate (v5) at (0.5,-0.5);
		\coordinate (v5p) at (0.5,-0.55);
		
		\draw (v1)--(v2)--(v3)--(v4)--cycle;
		\foreach \p in {(v1),(v2),(v3),(v4)}
			\draw[line width=1mm] (v5) -- \p;
		
            \foreach \p in {(v1),(v2),(v3),(v4)}
			\draw[fill=black] \p circle (0.06);

        \draw[fill=black] (v5) circle (0.07);
		
		\path (v1) node[anchor=south] {$1$};
		\path (v2) node[anchor=south] {$2$};
		\path (v3) node[anchor=north] {$3$};
		\path (v4) node[anchor=north] {$4$};
		\path (v5p) node[anchor=north] {$5$};
	\end{scope}
	\end{tikzpicture}
    \caption{The relative complexes of Example \ref{ex:1}, Example \ref{ex:2},
    and Example \ref{ex:3}. In each case, $\Gamma$ is drawn in 
    bold.}\label{fig:examples}
\end{figure}

\begin{ex}\label{ex:1}
    Let $\Delta \subset 2^{[4]}$ be the complete graph on $4$ vertices, that
    is, the complex consisting of all subsets of $[4]$ of size at most $2$.
    Let $\Gamma \subset \Delta$ be a perfect matching, see Figure \ref{fig:examples}. Then $\Delta \setminus
    \Gamma$ is the relative complex consisting of $4$ \emph{open} edges. This is
    a shellable relative complex. It is easy to check that on the fixed
    ground set $[4]$, this is the only presentation with $\dim \Delta = \dim
    \Gamma = 1$ and hence $\Psi$ is not fully Cohen--Macaulay.
\end{ex}

There are several possibilities to weaken the requirements on fully
Cohen--Macaulay, for example, the requirement that $\dim \Gamma = \dim \Psi$.
The next example, however, shows that the characterization of
Theorem~\ref{thm:relM} then ceases to hold.

\begin{ex}\label{ex:2}
    Let $\Delta \subseteq 2^{[4]}$ be the $1$-dimensional complex with facets
    $\{1,2\}, \{2,3\}, \{3,4\}, \{1,4\}$ and let $\Gamma$ be the complex
    composed of the vertices of $\Delta$. Then $\Psi = (\Delta,\Gamma)$ is a
    relative complex isomorphic to the relative complex of Example~\ref{ex:1}.
    Both $\Delta$ and $\Gamma$ are Cohen--Macaulay but $\dim \Gamma < \dim
    \Psi$. In particular, $\Psi$ is shellable with $h$-vector $h := h(\Psi) =
    (0,0,4)$. However, $h$ is not the $f$-vector of a relative multicomplex on
    ground set $[4-2]$, as any such (relative) multicomplex can have at most
    $3$ faces of dimension $1$.
\end{ex}

Nevertheless, it is possible to remedy the problem illustrated in
Example~\ref{ex:1} by allowing more vertices.

\begin{ex}\label{ex:3}
    Let $\Psi = (\Delta,\Gamma)$ be the relative complex of
    Example~\ref{ex:1}. Let $\Delta' := \Delta \cup \{ \{i,5\} : i \in [4]\}$
    be the graph-theoretic cone over $\Delta$ and define $\Gamma'$
    accordingly. Then $\Delta \setminus \Gamma = \Delta' \setminus \Gamma'$
    and, since $\Delta'$ and $\Gamma'$ are connected graphs and hence
    Cohen--Macaulay, this shows that $\Psi$ is a fully Cohen--Macaulay relative
    complex over the ground set $[5]$.
\end{ex}

The following result now shows that every Cohen--Macaulay relative complex is
fully Cohen--Macaulay if the ground set is sufficiently enlarged.

\begin{thm}\label{thm:nice}
    Let $\Gamma \subset \Delta \subseteq 2^{[n]}$ be simplicial complexes,
    such that $\Psi = (\Delta, \Gamma)$ is Cohen-Macaulay of dimension $d-1$.
    Let $e$ be the depth of $\kk[\Gamma]$.
    Then there exist $\Gamma' \subseteq \Delta' \subseteq 2^{[n+d-e]}$, such
    that $\Delta' \setminus \Gamma' = \Delta \setminus \Gamma$, and both
    $\Delta'$ and $\Gamma'$ are Cohen-Macaulay of dimension $d-1$.
\end{thm}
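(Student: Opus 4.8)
The plan is to build $\Delta'$ and $\Gamma'$ from $\Delta$ and $\Gamma$ by coning, adding new apex vertices one at a time, and to control the Cohen--Macaulayness via the Stanley--Reisner module and the short exact sequence \eqref{eq:seq1}. Since $\Psi = (\Delta,\Gamma)$ is Cohen--Macaulay of dimension $d-1$, the module $\SRmod[\Psi] = I_\Gamma/I_\Delta$ is CM of Krull dimension $d$. The complex $\Delta$ itself need not be CM, but we will see it does not have to be: what matters is to raise the depth of $\kk[\Gamma]$ (currently $e$) up to $d$, and then deduce that $\kk[\Delta]$ becomes CM automatically.

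First I would recall the effect of coning on Stanley--Reisner rings: if $\Gamma * v$ denotes the cone over $\Gamma$ with a fresh apex $v$, then $\kk[\Gamma * v] \cong \kk[\Gamma][x_v]$, so $\operatorname{depth} \kk[\Gamma * v] = \operatorname{depth}\kk[\Gamma] + 1$ while $\dim$ also goes up by $1$; in particular coning never decreases depth relative to dimension, and a complex is CM iff its cone is. Crucially, for a relative complex, replacing $(\Delta,\Gamma)$ by $(\Delta * v,\ \Gamma * v)$ does \emph{not} change the set difference: $(\Delta * v) \setminus (\Gamma * v) = \Delta \setminus \Gamma$, because every new face containing $v$ lies in $\Gamma * v$ exactly when its link-part lies in $\Gamma$. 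Hence all the constructions below preserve $\Psi$, and the new ground set after adding $d-e$ apexes is $[n + d - e]$, as claimed, and $\dim(\Delta * v \cdots) = (d-1) + (d-e)$? — no: I need to be careful. Coning raises dimension, so I cannot cone $d-e$ times blindly. Instead the right construction is: cone $\Gamma$ and $\Delta$ \emph{simultaneously} a total of $d-e$ times, but only over apexes that are \emph{not} added to the facets that already have full dimension — i.e. I cone the whole pair but the dimension of $\Psi$ must stay $d-1$. The clean way: one does \emph{not} literally cone; one adds, for each new vertex $w_j$, the faces $\{w_j\}\cup\tau$ for $\tau$ ranging over a CM subcomplex of $\Gamma$ of the right dimension, designed to push the depth up by one each time, Reisner's criterion being the bookkeeping device.

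The key steps, in order, are: (1) reduce to increasing $\operatorname{depth}\kk[\Gamma]$ by one while preserving $\Psi$ and $\dim\Gamma = \dim\Psi = d-1$ and keeping $\SRmod[\Psi]$ CM; (2) show that after $d-e$ such steps $\kk[\Gamma]$ has depth $d$, i.e. is CM (as $\dim\kk[\Gamma] \le d$ throughout, this forces $\dim\kk[\Gamma]=d$ and CM); (3) conclude that $\kk[\Delta']$ is CM from the short exact sequence $0 \to \SRmod[\Psi] \to \kk[\Delta'] \to \kk[\Gamma'] \to 0$ together with the depth lemma (if the sub and quotient are CM of the same dimension $d$, the middle term is CM of dimension $d$). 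For step (1), the concrete move is: pick a fresh vertex $w$ and set $\Gamma' := \Gamma \cup (w * \Gamma_{\le d-2})$ where $\Gamma_{\le d-2}$ is the $(d-2)$-skeleton — more precisely, one attaches $w$ to a CM subcomplex witnessing the depth, using the fact (from the sequence \eqref{eq:seq1} and the CM-ness of $\SRmod[\Psi]$) that $\kk[\Gamma]$ and $\kk[\Delta]$ differ in depth by a controlled amount. I would use Reisner's criterion: $\kk[\Gamma]$ is CM of dimension $d$ iff $\widetilde H_i(\operatorname{lk}_\Gamma \sigma;\kk) = 0$ for $i < d - 1 - \dim\sigma$ and all $\sigma$; adding a cone point $w$ over a suitable subcomplex kills the lowest-degree nonvanishing reduced homology of $\Gamma$ and of all its links, raising the depth by exactly one, and the same computation shows $\SRmod[\Psi]$ stays CM because its local cohomology is unchanged in the relevant degrees.

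The main obstacle I expect is step (1): arranging that a single vertex addition simultaneously (i) raises $\operatorname{depth}\kk[\Gamma]$ by one, (ii) does not raise $\dim\Psi$ above $d-1$, (iii) does not change the set difference $\Delta\setminus\Gamma$, and (iv) preserves CM-ness of $\SRmod[\Psi]$. Conditions (iii) and (iv) are what pin down exactly which faces through $w$ must be added to \emph{both} $\Delta$ and $\Gamma$ (so that they cancel in the difference), and the tension is that adding faces to $\Gamma$ to fix its homology might create new facets of dimension $\ge d-1$ unless one adds them to $\Delta$ as well — which is fine for (iii) but then one must recheck (iv) via the long exact sequence in local cohomology associated to \eqref{eq:seq1}. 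Getting the homological bookkeeping right — i.e. verifying that attaching $w * (\text{skeleton or suitable CM subcomplex})$ to the pair does exactly what is needed, with the depth of $\kk[\Gamma]$ going up by precisely $1$ and never overshooting the dimension — is the technical heart of the argument; everything else is the depth lemma applied to \eqref{eq:seq1} and an induction on $d-e$.
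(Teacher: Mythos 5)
Your high-level plan is the same as the paper's: enlarge the ground set by $d-e$ new vertices so that $\operatorname{depth}\kk[\Gamma']$ reaches $d$, keep the set difference unchanged, and then deduce that $\Delta'$ is Cohen--Macaulay by applying the depth lemma to the short exact sequence $0 \to \SRmod[\Psi] \to \kk[\Delta'] \to \kk[\Gamma'] \to 0$. That last step and the observation that adding cone faces to both $\Delta$ and $\Gamma$ preserves $\Delta\setminus\Gamma$ are both correct and are exactly the closing moves of the paper's argument.

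The gap is in the middle, and it is the part you yourself flag as ``the technical heart.'' You get stuck on the worry that coning raises dimension, and therefore propose a one-vertex-at-a-time induction in which each step must simultaneously raise the depth of $\kk[\Gamma]$ by exactly one, leave $\dim\Psi$ at $d-1$, preserve the set difference, and preserve CM-ness of $\SRmod[\Psi]$ --- and you do not actually construct such a step, only sketch that Reisner's criterion should govern the bookkeeping. None of this is needed. The paper simply lets the dimension rise: set $\Gamma_1$ to be the $(d-e)$-fold cone over $\Gamma$ (so $\operatorname{depth}\kk[\Gamma_1] = d$, since $\kk[\Gamma_1] = \kk[\Gamma][y_1,\dots,y_{d-e}]$), set $\Delta_1 := \Delta \cup \Gamma_1$ (still $\Delta_1\setminus\Gamma_1 = \Delta\setminus\Gamma$), and then takes $\Delta'$ and $\Gamma'$ to be the $(d-1)$-skeleta of $\Delta_1$ and $\Gamma_1$. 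Since $\dim(\Delta\setminus\Gamma) = d-1$, skeletonizing still preserves the set difference. The one ingredient you are missing is the off-the-shelf fact that the $(d-1)$-skeleton of a complex whose Stanley--Reisner ring has depth $\geq d$ is Cohen--Macaulay (the paper cites Hibi); this replaces all of the Reisner-criterion homological bookkeeping you were anticipating. Once $\Gamma'$ is known CM of dimension $d-1$ and $\SRmod[\Psi]$ is CM of the same Krull dimension $d$, the depth lemma gives $\kk[\Delta']$ CM, exactly as in your step (3).
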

\begin{proof}
    Let $\Gamma_1$ be the $(d-e)$-fold cone over $\Gamma$ and set $\Delta_1 :=
    \Delta \cup \Gamma_1$. Then $\Delta_1 \setminus \Gamma_1 = \Delta
    \setminus \Gamma$. Further note that $\kk[\Gamma_1] =
    \kk[\Gamma][y_1,\dotsc, y_{d-e}]$, where the $y_i$ are new variables.
    Thus, the depth of $\kk[\Gamma_1]$ is $d$. Finally, we define $\Delta'$ and
    $\Gamma'$ to be the $(d-1)$-dimensional skeleta of $\Delta_1$ and
    $\Gamma_1$, respectively. Again, $\Delta' \setminus \Gamma' = \Delta
    \setminus \Gamma$ and thus $\Psi \cong (\Delta', \Gamma')$.  By
    \cite[Corollary 2.6]{Hibi}, $\Gamma'$ is Cohen-Macaulay.  By assumption,
    $\Psi = \Delta' \setminus \Gamma'$ is Cohen-Macaulay, and since $\dim \Psi
    = \dim \Delta' = \dim \Gamma'$, it follows from~\cite[Prop
    1.2.9]{Bruns-Herzog} that $\Delta'$ is also Cohen--Macaulay.
\end{proof}

In the construction given in the course of the proof, the complexes $\Delta$
and $\Gamma$ occur as induced subcomplexes. If we are to abandon this
requirement, then our computations suggest that it suffices to add a single new
vertex. Based on this evidence, we offer the following conjecture.
\begin{conj}\label{conj1}
    Every Cohen--Macaulay relative complex $\Psi$ on ground set $[n]$ is a
    fully Cohen--Macaulay relative complex on ground set $[n+1]$. That is,
    for every  $(d-1)$-dimensional Cohen--Macaulay
    relative complex $\Psi = (\Delta,\Gamma)$ on ground set $[n]$, there are
    Cohen--Macaulay simplicial complexes $\Gamma' \subseteq
    \Delta' \subseteq 2^{[n+1]}$ of dimension $d-1$, such that
	$\Delta \setminus \Gamma = \Delta' \setminus \Gamma'$.
\end{conj}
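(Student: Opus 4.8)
Here is the line of attack the constructions of this section suggest; the plan is to upgrade an arbitrary Cohen--Macaulay relative complex on $[n]$ to a fully Cohen--Macaulay one by a single cone over the new vertex $v := n+1$.

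\emph{Step 1: reduction to building one complex.} Fix a $(d-1)$-dimensional Cohen--Macaulay $\Psi = (\Delta,\Gamma)$ on $[n]$. Since relative complexes are locally closed, the ``boundary'' $B := \overline{\Psi}\setminus\Psi$ (where $\overline{\Psi}$ is the smallest simplicial complex containing $\Psi$) is itself a simplicial complex. For any simplicial complex $\Gamma' \subseteq 2^{[n+1]}$ that is disjoint from $\Psi$ and contains $B$, the union $\Delta' := \Gamma' \cup \Psi$ is again a simplicial complex with $\Delta' \setminus \Gamma' = \Psi$, and $\dim \Delta' = d-1$ provided $\dim \Gamma' \le d-1$. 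If in addition $\Gamma'$ is Cohen--Macaulay of dimension $d-1$, then, because $\Psi$ is Cohen--Macaulay and $\dim \Psi = \dim \Gamma' = \dim \Delta'$, the short exact sequence $0 \to I_{\Gamma'}/I_{\Delta'} \to \kk[\Delta'] \to \kk[\Gamma'] \to 0$ forces $\kk[\Delta']$ to be Cohen--Macaulay, by the depth argument used in the proof of Theorem~\ref{thm:relM} and in the last line of the proof of Theorem~\ref{thm:nice} (the module $I_{\Gamma'}/I_{\Delta'}$ depends only on $\Delta'\setminus\Gamma' = \Psi$). So $(\Delta',\Gamma')$ is then a fully Cohen--Macaulay presentation on $[n+1]$, and it remains only to build such a $\Gamma'$ with one new vertex.

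\emph{Step 2: the cone.} I would first replace $(\Delta,\Gamma)$ by a presentation of $\Psi$, still on $[n]$, in which $\Gamma$ has dimension exactly $d-2$ and $\Delta$ is Cohen--Macaulay of dimension $d-1$; here one is free to move faces between $\Delta$ and $\Gamma$ as long as $\Delta \setminus \Gamma = \Psi$ is preserved and $\Delta = \Gamma \cup \Psi$ stays a complex. Granting such a presentation, set $\Gamma' := v \ast \Gamma$, which is Cohen--Macaulay of dimension $d-1$ since $\kk[v\ast\Gamma] = \kk[\Gamma][v]$, and $\Delta' := \Delta \cup (v \ast \Gamma)$. Then $\Delta' \setminus \Gamma' = \Delta \setminus \Gamma = \Psi$, because the faces of $v \ast \Gamma$ lying in $2^{[n]}$ are exactly those of $\Gamma$; and $\Delta'$ is Cohen--Macaulay of dimension $d-1$ by the standard gluing lemma applied to the Mayer--Vietoris sequence $0 \to \kk[\Delta'] \to \kk[\Delta] \oplus \kk[v \ast \Gamma] \to \kk[\Gamma] \to 0$, since $\Delta \cap (v \ast \Gamma) = \Gamma$ is Cohen--Macaulay of dimension $d-2$. (If $B$ is already Cohen--Macaulay of dimension $d-2$, one simply uses $(\overline{\Psi}, B)$ and $\Gamma' := v \ast B$.)

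\emph{Main obstacle.} Everything hinges on the redistribution in Step 2: producing, on the \emph{fixed} ground set $[n]$, a presentation with $\dim \Gamma = d-2$ and $\Delta$ Cohen--Macaulay. The minimal presentation $(\overline{\Psi}, B)$ can fail both requirements at once --- $B$ need not be Cohen--Macaulay, and it may contain a facet of dimension $d-1$ (a facet of $\overline{\Psi}$ disjoint from $\Psi$), so a single cone repairs neither the dimension nor the Cohen--Macaulay property --- while enlarging $\Delta$ past $\overline{\Psi}$ to make it Cohen--Macaulay tends to push facets of dimension $d-1$ into $\Gamma$. Thus the crux is a combinatorial statement asserting that the hypothesis ``$I_\Gamma/I_\Delta$ is a Cohen--Macaulay module of dimension $d$'' already guarantees a presentation whose non-$\Psi$ part is $(d-2)$-dimensional and Cohen--Macaulay with $\Delta$ Cohen--Macaulay. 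I expect proving this to require either an algebraic argument exploiting a generic linear system of parameters for $I_\Gamma/I_\Delta$ (so as to exhibit the boundary as a codimension-$d$ phenomenon), or a direct shelling-type construction in the spirit of Proposition~\ref{prop:BFSrelShell}, possibly routed through relative multicomplexes; this is precisely the gap that keeps the statement a conjecture rather than a theorem.
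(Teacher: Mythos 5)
The statement you are addressing is Conjecture~\ref{conj1}, and the paper does not prove it either; it remains open. So the honest verdict is that neither you nor the paper supply a proof, and you correctly flag the gap rather than papering over it. The useful comparison is therefore between the two proposed routes to the conjecture.

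Your plan (Step~2) seeks a presentation of $\Psi$ on $[n]$ in which $\Gamma$ is Cohen--Macaulay of dimension exactly $d-2$ and $\Delta$ is Cohen--Macaulay, and then cones over a single new vertex, gluing via Mayer--Vietoris. The paper instead reduces Conjecture~\ref{conj1} to the weaker-looking Conjecture~\ref{conj2}: after arranging that $\Delta$ and $\Gamma$ share no minimal non-face (always possible by pushing common minimal non-faces into both complexes, at the cost of possibly raising $\dim\Gamma$), one only asks that $\operatorname{depth}\kk[\Gamma]\geq d-1$ --- with no constraint on $\dim\Gamma$ and no Cohen--Macaulayness of either $\Delta$ or $\Gamma$ --- and then invokes Theorem~\ref{thm:nice} with $e=d-1$: a single cone followed by passage to the $(d-1)$-skeleton, where Hibi's theorem on skeleta restores Cohen--Macaulayness of $\Gamma'$, and the short exact sequence $0\to\SRmod[\Psi]\to\kk[\Delta']\to\kk[\Gamma']\to 0$ then forces $\Delta'$ to be Cohen--Macaulay via the depth inequality. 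So the paper's unresolved crux is a depth estimate on $\kk[\Gamma]$, whereas yours is the stronger demand of a $(d-2)$-dimensional Cohen--Macaulay $\Gamma$ together with a Cohen--Macaulay $\Delta$; the paper's formulation is both weaker as a hypothesis and more robust as a target, which is why it is stated as a separate conjecture rather than embedded in a construction.

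One small correction to your discussion of the main obstacle: the minimal boundary $B=\overline{\Psi}\setminus\Psi$ can never contain a $(d-1)$-dimensional face. Every $(d-1)$-face of $\overline{\Psi}$ is a facet of $\overline{\Psi}$, hence a facet of $\Psi$, hence lies in $\Psi$, so $\dim B\leq d-2$ automatically. The genuine difficulties are exactly the ones you name otherwise: $B$ need not be Cohen--Macaulay, and enlarging $\Delta$ beyond $\overline{\Psi}$ to fix that may damage either the dimension bound on $\Gamma$ or its depth. Your Step~1 (the reduction showing that a suitable Cohen--Macaulay $\Gamma'$ of full dimension on $[n+1]$, disjoint from $\Psi$ and containing $B$, automatically yields a fully Cohen--Macaulay presentation) is correct and matches the mechanism in the last lines of the proof of Theorem~\ref{thm:nice}.
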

We also offer a more precise conjecture on how the complexes $\Gamma'
\subset \Delta'$ can be obtained.

\begin{conj}\label{conj2}
    Let $\emptyset \neq \Gamma \subsetneq \Delta \subset 2^{[n]}$ be two
    simplicial complexes, such that the relative complex $(\Delta,\Gamma)$ is
    Cohen--Macaulay of dimension $d-1$ over some field $\kk$.  If $\Delta$ and
    $\Gamma$ have no common minimal non-faces, then the depth of $\kk[\Gamma]$
    is at least $d-1$.
\end{conj}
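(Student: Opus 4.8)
We outline the approach we have in mind for Conjecture~\ref{conj2}. Write $\SRmod[\Psi] = I_\Gamma/I_\Delta$ for the Stanley--Reisner module, so that there is a short exact sequence $0 \to \SRmod[\Psi] \to \kk[\Delta] \to \kk[\Gamma] \to 0$ in which, by hypothesis, $\SRmod[\Psi]$ is Cohen--Macaulay of Krull dimension $d$. The associated long exact sequence in local cohomology, together with $H^i_{\mathfrak m}(\SRmod[\Psi]) = 0$ for $i \neq d$, yields isomorphisms $H^i_{\mathfrak m}(\kk[\Delta]) \cong H^i_{\mathfrak m}(\kk[\Gamma])$ for all $i < d-1$, so that $\operatorname{depth}\kk[\Gamma] \geq d-1$ \emph{if and only if} $\operatorname{depth}\kk[\Delta] \geq d-1$. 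It therefore suffices to prove the latter, and by Hochster's formula (see e.g.\ \cite[Ch.~II]{Stanley96}) this is equivalent to the vanishing $\widetilde{H}_j(\mathrm{lk}_\Delta W;\kk) = 0$ for every face $W \in \Delta$ and every $j < d-2-|W|$, that is, to the $(d-2)$-skeleton of $\Delta$ being Cohen--Macaulay.

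The faces $W \in \Delta\setminus\Gamma$ are harmless: for such $W$ the link $\mathrm{lk}_\Delta W$ coincides with the link of $W$ in $\Psi$, which is Cohen--Macaulay of dimension $d-1-|W|$ because $\SRmod[\Psi]$ is, so the required vanishing holds with a degree to spare. For the faces of $\Gamma$ the hypothesis must be brought in: it says precisely that every minimal non-face of $\Gamma$ is a face of $\Delta$. Writing $\tau_1,\dots,\tau_m$ for these minimal non-faces, this means that the $\tau_l$ are exactly the inclusion-minimal faces of $\Psi$, that $\Delta\setminus\Gamma = \bigcup_{l}\mathring{\mathrm{st}}_\Delta(\tau_l)$ is a union of relative open stars, and --- taking closures in $\Delta$ --- that $\Delta = \Gamma\cup\bigcup_{l}\mathrm{st}_\Delta(\tau_l)$. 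The crucial point is that arbitrary intersections of these open stars are again open stars, $\bigcap_{l\in S}\mathring{\mathrm{st}}_\Delta(\tau_l) = \mathring{\mathrm{st}}_\Delta(W_S)$ with $W_S := \bigcup_{l\in S}\tau_l$, and that whenever $W_S\in\Delta$ it contains some $\tau_l$ and hence lies in $\Delta\setminus\Gamma$; so $\mathrm{lk}_\Delta(W_S) = \mathrm{lk}_\Psi(W_S)$ is Cohen--Macaulay of dimension $d-1-|W_S|$ by the remark above, and the Stanley--Reisner module of $\mathring{\mathrm{st}}_\Delta(W_S)$ --- a tensor product of a free module with the Cohen--Macaulay ring $\kk[\mathrm{lk}_\Delta(W_S)]$ --- is Cohen--Macaulay of dimension $d$. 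Thus the \v Cech-type exact complex of Stanley--Reisner modules attached to this cover,
\[
0\to\SRmod[\Psi]\to\bigoplus_{l}\SRmod[\mathring{\mathrm{st}}_\Delta(\tau_l)]\to\bigoplus_{l<l'}\SRmod[\mathring{\mathrm{st}}_\Delta(\tau_l\cup\tau_{l'})]\to\cdots,
\]
has all of its terms Cohen--Macaulay of dimension $d$ (or zero), as does $\SRmod[\Psi]$ itself. The plan is to combine this complex, the Mayer--Vietoris description of $\kk[\Delta]$ coming from $\Delta = \Gamma\cup\bigcup_l\mathrm{st}_\Delta(\tau_l)$ and its Cohen--Macaulay closed stars, and the short exact sequence above, so as to force $H^i_{\mathfrak m}(\kk[\Delta]) = 0$ for $i < d-1$.

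The main obstacle is a circularity that the hypothesis has to break. Every natural decomposition of $\Delta$ separates it into a ``$\Gamma$-like'' part and Cohen--Macaulay star parts --- e.g.\ $\Delta = \Gamma\cup\bigcup_l\mathrm{st}_\Delta(\tau_l)$, or a deletion--star splitting --- and the corresponding Mayer--Vietoris estimate for $\operatorname{depth}\kk[\Delta]$ comes down to a lower bound for the depth of the $\Gamma$-like part, whose base case in any such induction is $\kk[\Gamma]$ itself: exactly the statement to be proved. Likewise, the long exact sequence of the pair $(\mathrm{lk}_\Delta W, \mathrm{lk}_\Gamma W)$ for $W \in \Gamma$ is useless, since Cohen--Macaulayness of $\Psi$ already forces $\widetilde{H}_j(\mathrm{lk}_\Delta W;\kk) \cong \widetilde{H}_j(\mathrm{lk}_\Gamma W;\kk)$ for all $j < d-2-|W|$. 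The content must therefore be that the relative open stars covering $\Delta\setminus\Gamma$ create no new homology in the range $j < d-2-|W|$ --- heuristically, a degeneration of the nerve spectral sequence of this cover in that range --- and this is where the Cohen--Macaulayness of all the stars and their intersections (and of $\SRmod[\Psi]$) should be decisive. Two further points will need care. First, the argument has to be run inside every link $\mathrm{lk}_\Delta W$ with $W \in \Gamma$, and the hypothesis ``no common minimal non-faces'' does \emph{not} pass to links: $\mathrm{lk}_\Delta W$ and $\mathrm{lk}_\Gamma W$ may share a minimal non-face even when $\Delta$ and $\Gamma$ do not, so the induction must instead be carried by a weaker invariant that does descend, or the global complex above analysed directly. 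Second, one should first reduce to the case $\dim\Delta = d-1$; this is not automatic, since replacing $\Delta$ by its $(d-1)$-skeleton can manufacture common minimal non-faces, and so it requires an argument of its own. We expect these last two points --- the failure of the hypothesis to localise at links, and the reduction to $\dim\Delta = d-1$ --- to be where the real difficulty lies.
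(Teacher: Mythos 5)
This statement is \emph{Conjecture}~\ref{conj2}, which the paper leaves open: it is not proved anywhere in the paper, only stated and used to deduce Conjecture~\ref{conj1} via Theorem~\ref{thm:nice}. There is therefore no ``paper's proof'' to compare against, and your submission does not claim to supply one either --- you say explicitly that the step which would actually yield the conjectured bound (that the cover of $\Delta\setminus\Gamma$ by relative open stars creates no new local cohomology below degree $d-1$) remains unestablished, and you flag the two reductions at the end (non-localisability of the no-common-minimal-non-face hypothesis at links, and the reduction to $\dim\Delta=d-1$) as the real difficulty. So what you have written is a plan of attack, not a proof; the conjecture is still open after it.

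For the record, the partial reductions you do carry out appear sound. The long exact sequence in local cohomology arising from $0\to \SRmod[\Psi]\to\kk[\Delta]\to\kk[\Gamma]\to 0$, together with the Cohen--Macaulayness of $\SRmod[\Psi]$ in dimension $d$, correctly gives $H^i_{\mathfrak m}(\kk[\Delta])\cong H^i_{\mathfrak m}(\kk[\Gamma])$ for $i<d-1$, so it is indeed equivalent to bound $\operatorname{depth}\kk[\Delta]$. Your reformulation of the hypothesis (every minimal non-face of $\Gamma$ is a face of $\Delta$, equivalently the minimal non-faces of $\Gamma$ are precisely the inclusion-minimal faces of $\Psi$), the identity $\Delta\setminus\Gamma=\bigcup_l\mathring{\mathrm{st}}_\Delta(\tau_l)$, the observation that intersections of these open stars are open stars of faces of $\Psi$ with Cohen--Macaulay links, and the exactness of the resulting inclusion--exclusion complex of Stanley--Reisner modules all check out. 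But, as you diagnose yourself, all terms of that complex (including $\SRmod[\Psi]$) are already known to be Cohen--Macaulay, so by itself the complex yields nothing new; the missing Mayer--Vietoris/spectral-sequence comparison that would turn this into $H^i_{\mathfrak m}(\kk[\Delta])=0$ for $i<d-1$ is exactly the gap, and the obstructions you list to a naive induction are genuine. In short: a reasonable and correctly set up outline, but not a resolution of the conjecture.
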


To see that Conjecture~\ref{conj2} implies Conjecture~\ref{conj1}, let $\Psi =
(\Delta,\Gamma)$ be a given presentation.  We can assume that $\Delta$ and
$\Gamma$ have no minimal non-faces in common.  Conjecture~\ref{conj2} then
assures us that $\kk[\Gamma]$ has depth $d-1$ and Theorem~\ref{thm:nice}
yields Conjecture~\ref{conj1}.

Instead of fixing the ground set, we may
instead consider the dimensions of the minimal faces in $\Psi = (\Delta,
\Gamma)$.
For a sequence $\alpha = (\alpha_1, \alpha_2, \alpha_3,\dotsc)$ of numbers and
$i \ge 0$ we set 
\[ 
    E^i\alpha \ := \ (\underbrace{0,\dotsc,0}_{i},\alpha_1, \alpha_2,
    \alpha_3,\dotsc) \, .
\]

\begin{thm}\label{thm:bjorner}
	For a vector $h = (h_0, \dotsc, h_{d}) \in \Znn^{d+1}$ and 
    numbers $a_1, \dotsc, a_r \in \Znn$, the following are
    equivalent:
	\begin{enumerate}[\rm (i)]
		\item $h = h(\Delta, \Gamma)$ for a shellable relative complex $(\Delta, \Gamma)$, whose minimal faces have cardinalities $a_1, \dotsc, a_r$;
		\item $h = h(\Delta, \Gamma)$ for a Cohen-Macaulay relative complex $(\Delta, \Gamma)$, whose minimal faces have cardinalities $a_1, \dotsc, a_r$;
		\item $h$ is the $h$-vector of a graded Cohen-Macaulay module (over some polynomial ring), whose generators have the degrees $a_1, \dotsc, a_r$.
        \item There exist M-sequences $\nu_1, \dotsc, \nu_r$ such that 
        \[
            h \ = \ 
            E^{a_1} \nu_1 + 
            E^{a_2} \nu_2 + 
            \cdots + 
            E^{a_r} \nu_r \, .
        \]
	\end{enumerate}
\end{thm}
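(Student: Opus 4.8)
The plan is to prove the cycle of implications (i) $\Rightarrow$ (ii) $\Rightarrow$ (iii) $\Rightarrow$ (iv) $\Rightarrow$ (i). The implication (i) $\Rightarrow$ (ii) is immediate from the fact, quoted in Section~\ref{sec:macaulay}, that shellable relative complexes are Cohen--Macaulay (and the property of having minimal faces of prescribed cardinalities is preserved verbatim). For (ii) $\Rightarrow$ (iii), I would pass to the Stanley--Reisner module $\SRmod[\Psi] = I_\Gamma/I_\Delta$ of the given presentation $\Psi = (\Delta,\Gamma)$: this is a Cohen--Macaulay $S$-module whose $h$-vector (the Hilbert function after quotienting by a generic linear system of parameters $\Theta$) is $h$, and whose minimal generators sit in degrees $a_1,\dots,a_r$, since a minimal generating set of $I_\Gamma/I_\Delta$ corresponds to the inclusion-minimal faces of $\Delta\setminus\Gamma$, a face of cardinality $a$ giving a generator of degree $a$. (One should note the minimal generators of $I_\Gamma/I_\Delta$ are not changed by modding out a regular sequence, by Nakayama.) This gives the required module over $S/\Theta \cong \kk[y_1,\dots,y_{n-d}]$, or over $S$ itself.

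The heart of the theorem is (iii) $\Leftrightarrow$ (iv), which is a graded refinement of Stanley's realization of Macaulay's theorem for $h$-vectors of Cohen--Macaulay complexes. For (iii) $\Rightarrow$ (iv): let $M$ be a Cohen--Macaulay module over a polynomial ring with minimal generators in degrees $a_1,\dots,a_r$. After killing a maximal regular sequence we may assume $M$ is a finite-dimensional graded algebra-module, i.e.\ a module over $\kk[z_1,\dots,z_m]$ that is finite-dimensional over $\kk$; pick a minimal homogeneous generating set $g_1,\dots,g_r$ with $\deg g_j = a_j$. Filter $M$ by the submodules $M^{(j)} := \langle g_1,\dots,g_j\rangle$; each successive quotient $M^{(j)}/M^{(j-1)}$ is a cyclic module generated in degree $a_j$, hence is a shift $E^{a_j}$ of the Hilbert function of a cyclic graded module generated in degree $0$ over a polynomial ring, which by Macaulay's theorem is exactly an $M$-sequence $\nu_j$. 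Summing Hilbert functions over the filtration gives $h = \sum_j E^{a_j}\nu_j$. The subtle point here, and what I expect to be the \emph{main obstacle}, is that a submodule of a Cohen--Macaulay module need not be Cohen--Macaulay, so one cannot directly iterate the clean short-exact-sequence argument used for Theorem~\ref{thm:relM}(a)$\Rightarrow$(c); instead one must argue purely at the level of Hilbert functions after reducing modulo a system of parameters, observing that the reduction stays exact \emph{there} because everything is already Artinian. A clean way to package this is: the existence of the filtration shows $H(M,-) = \sum_j H(M^{(j)}/M^{(j-1)},-)$ unconditionally, and each quotient is cyclic, so Macaulay applies to each summand directly; Cohen--Macaulayness of $M$ is used only to guarantee that after reducing mod a generic l.s.o.p.\ the Hilbert function of $M/\Theta M$ is genuinely $h$ and that the generator degrees are unchanged.

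For (iv) $\Rightarrow$ (i), I would build a shellable relative complex realizing a sum $\sum_j E^{a_j}\nu_j$ directly by a disjoint-union-type construction, mimicking Proposition~\ref{prop:BFSrelShell}. For a single $M$-sequence $\nu_j$, first realize $\nu_j$ as the $f$-vector of a compressed multicomplex $\tDelta_j$ and then, via the BFS bijection $\Phi_{a_j+\ell}$ for an appropriate top dimension $\ell$, turn the pair $(\tDelta_j, \tGamma_j)$ — where $\tGamma_j$ is obtained by removing the empty multiset so as to shift degrees up by $a_j$ — into a shellable relative complex all of whose minimal faces have cardinality $a_j$ and whose $h$-vector is $E^{a_j}\nu_j$; Proposition~\ref{prop:BFSrelShell} supplies the shelling. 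Then take the disjoint union over $j=1,\dots,r$ (on disjoint ground sets, then relabel onto a common ground set): disjoint unions of shellable relative complexes are shellable, the $h$-vectors add, and the multiset of minimal-face cardinalities is the union of the individual ones, namely $\{a_1,\dots,a_r\}$. Care is needed to choose the common top dimension $d$ (equal to $\max_j$ of the top dimension appearing in $E^{a_j}\nu_j$, padding shorter $\nu_j$ by zeros at the top) so that every piece genuinely has dimension $d-1$; this is harmless because one may always cone. Putting the four implications together closes the cycle.
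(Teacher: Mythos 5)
Your implications (i)$\Rightarrow$(ii)$\Rightarrow$(iii) are fine and match the paper (which treats them as ``clear''), and your filtration argument for (iii)$\Rightarrow$(iv) correctly reproduces the argument that the paper delegates to Proposition~5.2 of Stanley~\cite{stanley87}. The problem is (iv)$\Rightarrow$(i), where your plan has a genuine gap.

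You propose to realize each summand $E^{a_j}\nu_j$ via Proposition~\ref{prop:BFSrelShell}, feeding it a relative multicomplex $\tPsi_j = (\tDelta_j,\tGamma_j)$ with $f(\tPsi_j) = E^{a_j}\nu_j$, and you assert that the resulting shellable relative complex has all minimal faces of cardinality $a_j$. That assertion is false. In the BFS construction, the restriction set of $\Phi_d(\{s_1,\dots,s_k\}_\le)$ has cardinality $k=|F|$; so yes, all restriction sets have cardinality $\geq a_j$. But the minimal faces of the resulting $\Psi$ are the inclusion-minimal restriction sets, and restriction sets of different cardinalities need not be nested. Concretely: take $\nu = (1,2)$ and $a=1$, so $E^1\nu = (0,1,2)$, realized by the compressed pair $\tDelta = \{\emptyset,\{1\},\{2\},\{1,1\},\{1,2\}\}$, $\tGamma = \{\emptyset,\{1\}\}$ on ground set $[2]$. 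With $d=2$, the BFS facets of $\Delta$ are $\{1,2\},\{1,3\},\{1,4\},\{2,3\},\{2,4\}$ and those of $\Gamma$ are $\{1,2\},\{1,3\}$, so $\Psi$ contains the vertex $\{4\}$ and the edges $\{1,4\},\{2,3\},\{2,4\}$. Its minimal faces are $\{4\}$ \emph{and} $\{2,3\}$, of cardinalities $1$ and $2$ respectively — not all equal to $a=1$. A separate, smaller issue: your description of $\tGamma_j$ as ``obtained by removing the empty multiset'' does not define a multicomplex (a multicomplex missing the empty multiset is void), so the input to Proposition~\ref{prop:BFSrelShell} is not even well-formed as stated.

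The paper's (iv)$\Rightarrow$(i) is much more direct and sidesteps both problems. For each $j$, take a shellable simplicial complex $\Delta_j$ with $h(\Delta_j)=\nu_j$ (Björner--Frankl--Stanley), introduce $a_j$ fresh vertices $v_{j1},\dots,v_{ja_j}$, and let $\Psi_j$ be the relative complex $\{F\cup\{v_{j1},\dots,v_{ja_j}\} : F\in\Delta_j\}$. Any shelling of $\Delta_j$ induces one of $\Psi_j$, each restriction set grows by the $a_j$ new vertices so $h(\Psi_j)=E^{a_j}\nu_j$, and — crucially — $\Psi_j$ has a \emph{unique} minimal face, namely $\{v_{j1},\dots,v_{ja_j}\}$, of cardinality exactly $a_j$ (since $\emptyset\in\Delta_j$ is its unique minimal face). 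Coning to equalize dimensions and taking disjoint unions then finishes the proof. If you want to salvage the spirit of your approach, you'd essentially have to reintroduce the $a_j$ extra vertices anyway, which collapses back to the paper's construction.
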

The implications (i) $\Rightarrow$ (ii) $\Rightarrow$ (iii) are clear, and
(iii) $\Rightarrow$ (iv) is Proposition 5.2 of~\cite{stanley87}. 
In \emph{loc.~cit.} Anders Bj\"orner asked if the implication (iv)
$\Rightarrow$ (iii) also holds.

\begin{proof}
    We only need to show (iv) $\Rightarrow$ (i).  For each $i$, we can find a
    shellable simplicial complex $\Delta_i$ whose $h$-vector is $\nu_i$.
    Further, let $v_{i1},\dots,v_{i a_i}$ be new vertices and let $\Psi_i$ be
    the relative complex with faces $\{ F \cup \{v_{i1},\dots,v_{i a_i}\}
    \colon F \in \Delta_i\}$.  It is clear that any shelling order on
    $\Delta_i$ yields a shelling on $\Psi_i$, and that $h(\Psi_i) = E^{a_i}
    \nu_i$.  Finally, by taking cones if necessary, we may assume that all the
    $\Psi_i$ have the same dimension.  Then the disjoint union of the $\Psi_i$
    is the desired shellable relative complex.
\end{proof}

\bibliographystyle{amsalpha}
\bibliography{RelativeKKandM}

\end{document}